\title[On the number of defects in optimal quantizers]{On the number of defects in optimal quantizers on closed surfaces: the hexagonal torus}
\author{Jack Edward Tisdell$^\dagger$}
\email{jack.tisdell@mail.mcgill.ca}
\author{Rustum Choksi$^\dagger$}
\email{rustum.choksi@mcgill.ca}
\address{$^\dagger$Department of Mathematics and Statistics, McGill University}
\author{Xin Yang Lu$^\ddagger$}
\address{$^\ddagger$Department of Mathematical Sciences, Lakehead University}
\email{xlu8@lakeheadu.ca}
\date{\today}
\newcommand\T{\mathbb T}
\newcommand\R{\mathbb R}
\newcommand\N{\mathbb N}
\newcommand\E{\mathcal E}
\newcommand\M{\mathcal M}
\newcommand\m{\mu}
\newcommand\s{s}
\newcommand\Lo{\mathscr L}
\DeclareMathOperator\gap{gap}
\DeclareMathOperator\maxgap{max\,gap}
\DeclareMathOperator\df{def}
\DeclareMathOperator\diam{diam}
\DeclarePairedDelimiter\abs\lvert\rvert
\DeclarePairedDelimiter\norm\lVert\rVert
\newtheorem{theorem}{Theorem}[section]
\newtheorem{lemma}[theorem]{Lemma}
\newtheorem{corollary}[theorem]{Corollary}
\theoremstyle{definition}
\newtheorem*{definition}{Definition}
\newtheorem*{question}{Question}
\theoremstyle{remark}
\newtheorem{remark}{Remark}[section]
\begin{document}
\begin{abstract}
    We present a strategy for proving an asymptotic upper bound on the number of defects (non-hexagonal Voronoi cells) in the $n$ generator optimal quantizer on a closed surface (i.e., compact 2-manifold without boundary). The program is based upon a general lower bound on the optimal quantization error and related upper bounds for the L\"oschian numbers $n$ (the norms of the Eisenstein integers) based upon the Goldberg-Coxeter construction. A gap lemma is used to reduce the asymptotics of the number of defects to precisely the asymptotics for the gaps between L\"oschian numbers.
     We apply this strategy on the hexagonal torus and prove that the number of defects is at most $O(n^{1/4})$---strictly fewer than surfaces with boundary---and conjecture (based upon the number-theoretic L\"oschian gap conjecture) that it is in fact $O(\log n)$. Incidentally, the method also yields a related upper bound on the variance of the areas of the Voronoi cells. 
     We show further that the bound on the number of defects holds in a neighborhood of the optimizers. Finally, we remark on the remaining issues for implementation on the 2-sphere.
\end{abstract}

\maketitle

\section{Introduction}
Finding the optimal way to quantize a continuous distribution over a domain or manifold into a fixed number of points falls under the broad class of problems referred to as \emph{Optimal Quantization}. These types of problems tend to be highly non-local and are related to \emph{the crystallization conjecture} in mathematical physics  (\cite{Blanc2015, Bourne-Cristoferi2021}). Our interest here entails the minimization of energy functionals (quantization errors)  which exhibit a type of \emph{semi-locality} as they can be defined in terms of the ``neighborhoods'' of each point, for some suitable notion of ``neighborhood''. A natural approach along these lines is to take as the total energy the sum of individual energies of the Voronoi cells of each point in the tessellation they generate. Indeed, this is precisely what one obtains whenever one defines an energy density (whose integral is the energy) that depends only on the \emph{nearest} point---an obvious attempt toward semi-locality. 
To this end, we investigate for each $n$, the variational problem
\begin{equation}
    \text{minimize } 
    \E(Y) = \int_\M \min_{y\in Y} \varrho_\M(x,y)^r\,d\sigma_\M(x)
    \text{ over all $Y \subset \M$ with $\abs Y \le n$},
    \label{eq:energy}
\end{equation}
where $\M$ is some compact surface (a Riemannian $2$-manifold, to be precise), $\varrho_\M$ and $\sigma_\M$ are its induced distance and surface measure, and $r > 0$. We refer to $\E(Y)$ as the \emph{quantization error} (with respect to exponent $r$) in quantizing the measure $d\sigma_\M(x)$ by the discrete set $Y$. 
As mentioned above, this defining integral splits into the sum over Voronoi cells: 
\[
    \E(Y) 
    = \int_\M \min_y \varrho_\M(x,y)^r\,d\sigma_\M(x) 
    = \sum_y \int_{D_y} \varrho_\M(x,y)^r\,d\sigma_\M(x)
\]
where $D_y$ is the Voronoi cell containing $y$ in the diagram generated by $Y$. Thus, the Voronoi diagrams of solutions and near solutions are of central interest for this class of problems.
In this context, optimal quantization generalizes the problem of finding the \emph{optimal centroidal Voronoi tessellation} when $r=2$ (\cite{Du_1999}). These problems are also directly linked to semi-discrete optimal transport (see for example, \cite{Bourne2024}). 

There has been significant work, particularly in two dimensions, on the asymptotic behaviour of solutions as the number of generating points $n$ tends to infinity. First off, in Euclidean domains a well-known conjecture attributed to Gersho~\cite{Gersho1979} addresses the periodic nature of the configuration with least quantization error (alternatively, the centroidal Voronoi tessellation with lowest energy). It asserts that there exists a polytope $V$ (not depending on $r$) which tiles Euclidean space such that all interior Voronoi cells in the optimal configuration are asymptotically congruent scaled copies of $V$. In two dimensions (where the conjecture is fully resolved), the optimal polytope $V$ is a regular hexagon, corresponding to an optimal placement of points on a triangular lattice. In dimension three, the optimal polytope $V$ is conjectured to be the truncated octahedron,  corresponding to an optimal placement of points on a BCC (body centered cubic) lattice (see for example, \cite{Barnes1983,  Du2005, Choksi-Lu}). Voronoi cells which include the boundary are of course irregular, the number of which in two dimensions is $O(\sqrt n)$. 

Thanks to the seminal work of Gruber, the asymptotic emergence of the regular hexagonal tiling in two dimensions is totally generic. Indeed, for essentially any compact Riemannian 2-manifold and a broad class of functions $f$, the Voronoi cells tend toward nearly congruent regular hexagons (asymptotically Euclidean) except for a \emph{vanishingly small proportion}---i.e., $o(n)$ many---of possibly non-hexagonal cells. We refer the reader to Gruber~\cite{Gruber2001, Gruber2004} for the fully general results, but we will recall in Section \ref{sec-Gruber} the special cases that apply to our problem. 

In this article, we consider domains which are compact 2-manifolds without boundary and address the asymptotics for the number of non-hexagons in the optimal configuration. Borrowing from crystallography (see for example, \cite{Brojan2014, Jimenez2016, Guerra2018, Praetorius2018, Klatt2019, Meyra2019}), let us make the following definition: 
\begin{definition}
    A (topological) {\bf defect} in a quantizer $Y$ is a non-hexagonal Voronoi cell. We denote the number of such defects by $\df(Y)$. 
\end{definition}
We remark that from a mathematical point of view, the issue of defects in the optimal quantizers (and in crystallography in general) is largely open. Clearly, the $O(\sqrt n)$ many defects in the planar Euclidean case is a tighter bound than Gruber's general $o(n)$. It's also the optimal result in that case, the solutions are always well-distributed throughout the domain and so there must be, up to a constant factor, at least $\sqrt n$ many defects due to boundary effects alone. This invites the main question:
\begin{question}
    On a \emph{closed} surface, do energy minimizers have strictly fewer than $O(\sqrt n)$ many defects and if so, how much fewer?
\end{question}

Here we present a strategy to answer this question in the affirmative, and moreover produce quantitative upper bounds on the number of defects. The program is based upon a general lower bound on the optimal quantization error (the energy) and related upper bounds for  L\"oschian numbers $n$ (the norms of the Eisenstein integers) realized using the Goldberg-Coxeter construction. A gap lemma is used to reduce the asymptotics of the number of defects (for arbitrary $n$) to precisely the asymptotics for the gaps between L\"oschian numbers. 
Whereas, our ultimate goal is the application on the 2-sphere, here we complete the program for the \emph{hexagonal torus}; the simplest closed surface for which both Gruber's results and the Goldberg-Coxeter construction apply directly. It's important to understand that, nonetheless, the problem is non-trivial on the hexagonal torus for this surface does not admit regular hexagonal tilings except for L\"oschian numbers $n$, while the asymptotically optimal quantization problem concerns arbitrary (large) $n$.
We prove that the number of defects is at most $O(n^{1/4})$---strictly fewer than surfaces with boundary---and conjecture (based upon the number-theoretic L\"oschian gap conjecture) that it is in fact $O(\log n)$. Our  method also yields a related upper bound on the variance of the areas of the Voronoi cells. We show further that the bound on the number of defects holds in a neighborhood of the optimizers.

All of our results concern arbitrary positive exponent $r > 0$. Generality aside, we feel that this choice is insightful even for readers who are only interested in $r=2$, say. Especially when $r = 2$, one can easily lose sight of the distinct effects due to the value of $r$, the 2-dimensionality of the surface, and second order expansions of various functions. The authors found the general $r > 0$ perspective enormously helpful in teasing these apart.

Before proceeding, we wish to underscore and address a possible source of confusion. Namely, the \emph{number of defects of the minimal energy $n$-point configuration} is not to be misconstrued with the \emph{minimal number of defects among all $n$-point configurations}. In fact, for some manifolds $\M$ (including flat tori), it can be shown that there is a configuration for every sufficiently large $n$ whose Voronoi tessellation consists \emph{entirely} of hexagons, hence has no defects. The point is that even if a surface admits configurations for all (large) $n$ whose tessellations have no defects, their hexagons may be far from regular. But meanwhile, due to the extremely general results of Gruber, there is a sense in which the energy minimizers tend towards configurations whose tessellations are almost regular hexagonal tilings. It is this \emph{tension} which introduces defects into the minimizers: it is energetically preferable for a configuration to have as many \emph{nearly regular} hexagons as possible at the expense possibly of some non-hexagons. Indeed, we observe this phenomenon numerically (for $r=2$). For random initializations in the hexagonal torus for fixed $n$, most algorithms almost invariably converge to defect configurations. But even a state-of-the-art algorithm, \emph{MACN} (see \cite{Gonzalez2021}), which is capable of finding the regular (global) minimizer for $n$ which permit it, converges to defect configurations for other $n$. An example, $n=320$ is shown in Figure~\ref{fig:macn}. 

\begin{figure}[ht]
    \centering
    \includegraphics{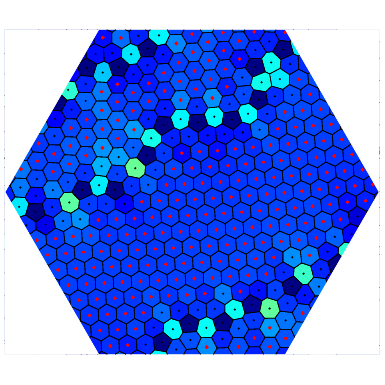}
    \caption{Best configuration for $n=320$ found in a run of MACN.}
    \label{fig:macn}
\end{figure}

As we have stated, our ultimate goal is to apply this program on the 2-sphere and obtain the same results based upon the L\"oschian gap conjecture. The remaining issue is the  upper bound constructions for the L\"oschian $n$ based upon suitable projections of certain Goldberg polyhedra onto the sphere. 
The structure of optimal constructions on the 2-sphere is a fascinating and largely open  field and includes such problems as Smale's 7th problem and the Thomson problem on the sphere (both unsolved in general). 

\subsection{Outline}
\label{sec:outline}
To demonstrate our main result, we will derive a lower bound on the energy minima by direct calculation. We will then provide an exactly corresponding upper bound \emph{along a subsequence} by way of an explicit construction (the Goldberg-Coxeter construction). Finally, using the fact that the energy minima are non-increasing in $n$, we will be able to interpolate between the subsequence of tight upper bounds while accounting for the error. We'll see that the upper bound we obtain for all $n$ still matches the lower bound to leading order and it's in the next-order difference that the defect structure is revealed. We will also discuss stability of the number of defects near energy minimizers. The paper is outlined as follows.
\begin{itemize}
    \item In Section~\ref{sec:toolkit}, we introduce some basic definitions as well as some key theoretical tools. 
        \begin{itemize}
            \item We state the special case of Gruber's results which apply in this setting and prove the following consequences. (1) The diameter and area of the Voronoi cells of the minimizers scale like $n^{-1/2}$ and $n^{-1}$, respectively, and there are uniform upper bounds on the number of cells neighboring any given one and on the number of cells meeting at any one point. (2) We show that in the flat case\footnote{The flat case suffices for the results of this paper but we expect this lemma to generalize readily, in particular to the constant curvature setting.}, the average number of Voronoi edges per cell tends to six as $n \to \infty$. 
            \item We define the \emph{regular polygon moment} $\m(a,k)$, a function which naturally arises in the leading order contribution to the energy minima, and discuss its relevant properties. 
            \item We provide the ``$\frac1n + O(e(n))$ interpolation lemma'', an elementary but paramount result. This result allows us later to fill in the gaps between the values of $n$ for which our construction gives explicit upper bounds and estimate the worst case error incurred in doing so.
        \end{itemize}
    \item Section~\ref{sec:torus} contains the main body of the argument. We focus on the hexagonal torus $\T$ and first compute a lower bound for the energy minima. We then explain the Goldberg-Coxeter construction and relate the resulting tilings of $\T$ to the energy, giving an upper bound on an explicit subsequence. Finally, the main theorem follows as a consequence of these bounds and the interpolation lemma. 
    \item In the short Section~\ref{sec:stability}, we discuss the stability of the main result of Section~\ref{sec:torus}, effectively extending it to a neighborhood of each minimizer whose size we quantify. The result we obtain is weaker than we believe is possible but it suffices to rule out certain otherwise plausible pathologies due to the inherently discontinuous nature of the defect counting map. It also serves to highlight quantitatively the sensitivity to perturbations of the various steps of reasoning in the prior sections.
    \item Finally, in Section~\ref{sec:sphere}, we discuss our work in progress adapting this argument to the 2-sphere. 
\end{itemize}

\subsection{Notation}
\label{sec:notation}
Throughout the paper, we adopt the following notational conventions. $\M$ always denotes a (compact orientable) Riemannian 2-manifold with distance $\varrho_\M : \M \times \M \to [0,\infty)$ and surface measure $\sigma = \sigma_\M$ both induced by the Riemannian metric. As in the previous sentence, we typically omit the subscript $\M$ on the surface measure. We typically denote points of $\M$ with lower-case Latin letters near the end of the alphabet $w,x,y,z,\ldots$, possibly with subscripts or other decorations $z_j,y'$, etc. Diameters of sets $E \subset \M$ and open balls in $\M$ are denoted by $\diam_\M(E) = \sup_{x,y \in E} \varrho_\M(x,y)$ and $B(x,r) = \{y \in \M \mid \varrho_\M(x,y) < r\}$, respectively. Following Section~\ref{sec:toolkit}, we are interested in the specific setting $\M = \T = S^1 \times \frac{\sqrt 3}{2}S^1$ of the hexagonal torus and later in more general flat tori $\T_\gamma = S^1 \times \gamma S^1$. 

$\E$ always denotes the functional defined in \eqref{eq:energy} with respect to the (implicit) exponent $r$. The letter $r$ always denotes the exponent appearing in $\E$. We will always denote by $Y_n \subset \M$ a solution of the minimization problem \eqref{eq:energy}. In particular, $Y_n$ is (without loss of generality) a set of exactly $n$ points. Sometimes, we'll employ similar notations, e.g., $Y, \tilde Y_n, Y_{p,q} \subset \M$, for finite sets of points not necessarily minimizers of $\E$ (but perhaps related to one $Y_n$). Very often when working with such a $Y_n$, we will be interested in sums and minima over $y \in Y_n$, which in context we shall denote $\sum_y \cdots$ and $\min_y \cdots$ without confusion. The letter $n$ is exclusively used in this way, the cardinality of the solution $Y_n$ under consideration. We are interested in large $n$ and asymptotic/convergence statements are understood as $n\to\infty$ unless otherwise stated. We always denote by $D_y \subset \M$ the (closed) Voronoi cell of $y \in Y_n$ in the Voronoi tessellation of $\M$ generated by $Y_n$ (as defined in the next section).

Several objects of central interest are the number of defects $\df(Y_n)$ in the Voronoi tessellation of $Y_n$, the variance $s^2(Y_n) = \frac1n \sum_y \abs{\sigma(D_y) - \frac1n \sigma(\M)}^2$ of the areas of the Voronoi cells, and the set $\Lo = \{p^2 + pq + q^2\mid p,q \in \N\}$ of L\"oschian numbers\footnote{\url{https://oeis.org/A003136}} (equivalently, the norms of the Eisenstein integers).

We adopt the standard definitions for Bachman-Landau ``big-O'' notations for asymptotic bounding and domination including $O$, $\Omega$, $\Theta$, and $o$. Namely, $f(n) = O(g(n))$ whenever there is a $C > 0$ such that $f(n) \le Cg(n)$ eventually; $f(n) = \Omega(g(n))$ if there is a $C > 0$ such that $f(n) \ge Cg(n)$ eventually; $f(n) = \Theta(g(n))$ if both $f(n) = O(g(n))$ and $f(n) = \Omega(g(n))$; and $f(n) = o(g(n))$ if for all $\varepsilon > 0$, eventually $f(n) \le \varepsilon g(n)$. Again, these are always understood as $n\to\infty$ unless otherwise stated. These quantities are non-negative unless otherwise specified, e.g., we will sometimes write $\pm O(e(n))$ to mean a quantity between $-Ce(n)$ and $+Ce(n)$ for some positive constant $C$. We also write $f(n) \sim g(n)$ to mean $\frac{f(n)}{g(n)} \to 1$ as $n \to \infty$ (note $f \sim g$ is strictly stronger than $f = \Theta(g)$).

\section{Our toolkit}
\label{sec:toolkit}
From a bird's eye view, so to speak, our arguments employ four essential tools: Gruber's results (and some consequences thereof) concerning solutions of the minimization problem \eqref{eq:energy}; a paramount function $\m(a,k)$ we call the regular polygon moment; a means of ``filling in the gaps'' between asymptotic estimates of non-increasing functions about which we have only partial information; and the well-known Goldberg-Coxeter construction. The Goldberg-Coxeter construction is best explained when we need it so we shall withhold its discussion until the next section. For now, we remark that the applicability of the Goldberg-Coxeter construction is what distinguishes the orientable equi-triangulated surfaces. The first three tools, on the other hand, are totally general and we will discuss them in turn after setting the stage.

Given a finite set of points $Y \subset \M$, the (closed) \emph{Voronoi cell $D_y$} of $y \in Y$ (with respect to $Y$) is
\[
    D_y = \{x \in \M : \text{$\varrho_\M(x,y) \le \varrho_\M(x,y')$ for all $y' \in Y$}\},
\]
that is, $D_y$ is the region in $\M$ consisting of all points closer to $y$ than to any other point of $Y$. A Voronoi edge is a region of overlap between two Voronoi cells and a Voronoi vertex is a point common to three or more cells. By the \emph{degree} of a Voronoi vertex, we mean the number of Voronoi cells incident to it. Distinct $y,y' \in Y$ whose cells share an edge are said to be Voronoi neighbors. In the flat torus, the Voronoi cells are always convex polygons and the notions of edge and vertex coincide with the polygonal notions. The Dirichlet-Voronoi cells tessellate the whole surface in the sense that their union is $\M$ and their interiors are disjoint. The tessellation as a whole is referred to as the \emph{Voronoi tessellation of $Y$}.

\subsection{Gruber's results on optimal configurations and some lemmas}\label{sec-Gruber}
We now recall some key results of Gruber concerning solutions of the minimization problem \eqref{eq:energy}. We refer readers to the cited works for the details and the more general statements.

\begin{theorem}[Gruber]
    Let $\E$ be the energy functional defined in \eqref{eq:energy} for any $r > 0$. Suppose $Y_n$ (with $\abs{Y_n} = n$) attains the minimum $\E(Y_n)$ for each $n = 1,2,\dots$. Then there are constants $C,C' > 0$ such that
    \begin{enumerate}
        \item ($\Omega(n^{-1/2})$ uniformly discrete.) $\varrho_\M(y,y') \ge C/\sqrt n$ for all distinct $y,y' \in Y_n$,
        \item ($O(n^{-1/2})$ covering radius.) $\min_{y \in Y_n}\varrho_\M(y,x) \le C'/\sqrt n$ for every $x \in \M$.
    \end{enumerate}
    We say that $Y_n$ satisfying the above two properties is $n^{-1/2}$-Delone. Furthermore, if $\tilde Y_n$ with $\abs{\tilde Y_n} = n$ satisfy $\E(\tilde Y_n) \sim \E(Y_n)$, then
    \begin{enumerate}[resume]
        \item\label{item:asym_reg_hex} the sequence $\tilde Y_n$ is asymptotically a regular hexagonal pattern\footnote{We acknowledge the strangeness of this definition. The $1.1$ is \emph{ad hoc}, any $1+\delta$ for sufficiently small $\delta$ would work just as well if at the expense of a greater constant in the $o(n)$ Landau symbol. In the interest of rigor, one might more seriously worry that this definition admits \emph{degenerate} ``regular hexagons'' with $y$ located at the corner of an equilateral triangle of side length $\varrho_n$ with $y_1,y_3,y_5$ near another corner and $y_2,y_4,y_6$ near the third. But even if there were such a $y$, then its $y_1,\dots,y_6$ would themselves fail to have the asymptotically regular hexagonal pattern property (even allowing for degeneracy) and so this kind of degeneracy can only happen $o(n)$ many times anyway and all such $y$ can be regarded as among the exceptions.} of edge length $\varrho_n = \sqrt{\frac{2\sigma(\M)}{\sqrt{3}n}}$ (to borrow Gruber's terminology) in the sense that there is a positive sequence $\varepsilon_n$ converging to $0$ such that for each $y \in \tilde Y_n$, with at most $o(n)$ many exceptions, there are six distinct $y_1,\dots,y_6 \in \tilde Y_n$ all distinct from $y$ such that
            \[
                B(y,1.1\varrho_n) \cap \tilde Y_n = \{y,y_1,\dots,y_6\}
            \]
            holds where 
            \[
                \varrho_\M(y,y_j), \varrho_\M(y_j,y_{j+1}) = (1\pm \varepsilon_n)\varrho_n\text{ for $j = 1,\dots,6$ and $y_7 = y_1$}.
            \]
            Here $(1\pm\varepsilon_n)\varrho_n$ denotes a quantity between $(1-\varepsilon_n)\varrho_n$ and $(1+\varepsilon_n)\varrho_n$. 
    \end{enumerate}
    \label{thm:Gruber}
\end{theorem}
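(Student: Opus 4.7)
The plan is to establish the Delone properties (1)--(2) by simple swap/variational arguments, and then derive (3) from L.~Fejes T\'oth's hexagonal moment inequality combined with the local-Euclidean approximation available on any smooth Riemannian 2-manifold.

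The first step is to pin down the order of magnitude $\E(Y_n) = \Theta(n^{-r/2})$. For the upper bound, any quasi-uniform test configuration (a triangular lattice in coordinate charts glued by a partition of unity, say) produces $n$ cells of diameter $O(n^{-1/2})$, each contributing $O(n^{-r/2 - 1})$ to the integral, for a total of $O(n^{-r/2})$. For the lower bound, writing $a_y = \sigma(D_y)$, comparison of $D_y$ with a Euclidean ball of equal area gives $\int_{D_y} \varrho_\M(x,y)^r\,d\sigma \ge c_r\, a_y^{r/2+1}$ (up to a $(1+o(1))$ Riemannian correction), and Jensen's inequality on the convex function $a\mapsto a^{r/2+1}$, combined with $\sum_y a_y = \sigma(\M)$, yields $\E(Y_n)\ge c_r \sigma(\M)^{r/2+1} n^{-r/2}$. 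Property (2) is then immediate by contradiction: a point $x$ at distance $>C'/\sqrt n$ from every $y\in Y_n$ forces the ball $B(x, C'/(2\sqrt n))$ to contribute $\Omega((C')^{r+2} n^{-(r+2)/2})$, which exceeds the total energy budget once $C'$ is large. For (1) I would argue by a swap: if $\varrho_\M(y,y')<\varepsilon/\sqrt n$ for distinct $y,y'\in Y_n$, the contribution of the thin cell $D_{y'}$ is $O(\varepsilon^{r+2} n^{-(r+2)/2})$; deleting $y'$ and reinserting it at any $x$ where the covering radius is $\Omega(n^{-1/2})$ (such $x$ must exist by area averaging unless $Y_n$ is already well-spaced) strictly lowers $\E$, contradicting minimality.

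The heart of the theorem is (3). The essential Euclidean input is L.~Fejes T\'oth's moment inequality: among convex bodies $K$ of prescribed area $a$, the regular hexagon $H_a$ minimizes $\I(K) := \min_z \int_K \abs{x-z}^r\,dx$, with a stability refinement of the form $\I(K)\ge(1+\eta(\varepsilon))\I(H_a)$ whenever $K$ is $\varepsilon$-far from $H_a$ in Hausdorff distance. Pulling this inequality to $\M$ through Riemann normal coordinates centered at each $y$ incurs only a $(1+o(1))$ multiplicative distortion because cell diameters are $O(n^{-1/2})$. Therefore
\[
    \E(\tilde Y_n) \;\ge\; (1-o(1)) \sum_y \I(H_{a_y}) \;\ge\; (1-o(1))\, n\, \I(H_{\sigma(\M)/n}),
\]
where the second inequality is Jensen's applied to the convex function $a\mapsto c_r a^{r/2+1}$. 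Since by hypothesis and the matching upper bound $\E(\tilde Y_n)\sim\E(Y_n)\sim n\,\I(H_{\sigma(\M)/n})$, both inequalities must be near-equalities: the Jensen step forces all but $o(n)$ cell areas to lie within $o(\sigma(\M)/n)$ of $\sigma(\M)/n$, and the stability step then forces those typical cells to be $\varepsilon_n$-close to the regular hexagon of edge $\varrho_n=\sqrt{2\sigma(\M)/(\sqrt 3 n)}$.

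Finally, one must translate ``$D_y$ is close to the regular hexagon of edge $\varrho_n$'' into the combinatorial neighbor statement. A regular hexagonal Voronoi cell $D_y$ of edge $\varrho_n$ is forced by exactly six neighbors situated at the vertices of a regular hexagon around $y$, each at distance $\varrho_n$; a Hausdorff-$\varepsilon_n$ perturbation of the cell perturbs these neighbor positions at the same rate, yielding the $(1\pm\varepsilon_n)\varrho_n$ estimates on both $\varrho_\M(y,y_j)$ and the cyclic distances $\varrho_\M(y_j,y_{j+1})$, and guaranteeing $B(y,1.1\varrho_n)\cap \tilde Y_n=\{y,y_1,\dots,y_6\}$ for large $n$. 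The $o(n)$ exceptional cells absorb both the area-atypical cells and the degenerate configurations flagged in the footnote. I expect the main obstacle to be the \emph{quantitative} Fejes T\'oth stability with explicit rate: the classical inequality gives only rigidity at equality, so either a compactness-and-contradiction argument (as in Gruber's original papers) or a modern quantitative stability theorem is required to extract an explicit sequence $\varepsilon_n\to 0$.
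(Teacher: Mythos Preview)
The paper does not prove this theorem at all: its ``proof'' consists solely of citations to Gruber's original papers~\cite{Gruber2004} and~\cite{Gruber2001}. So there is no approach to compare against; what you have written is an independent attempt at Gruber's argument.

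Your sketch for the Delone properties (1)--(2) is broadly along the right lines, though the swap argument for (1) is a bit loose (you need to quantify carefully that a point with covering radius $\ge c/\sqrt n$ actually persists after deleting $y'$, and that the gain there beats the loss at $y'$; Gruber handles this more carefully).

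The real gap is in your treatment of (3). Your key Euclidean input is misstated: among \emph{all} convex bodies of area $a$, it is the \emph{disc}, not the regular hexagon, that minimizes $\I(K)=\min_z\int_K |x-z|^r\,dx$. Fejes~T\'oth's moment lemma says only that among convex $k$-gons of area $a$, the regular $k$-gon is optimal. The hexagon enters the picture through a separate combinatorial step you have omitted entirely: one needs Euler's formula (on the closed surface $\M$) to conclude that the \emph{average} number of Voronoi edges $\bar k$ is at most $6$, and then use that $\mu(a,k)$ is convex and decreasing in $k$ to pass from $\sum_y \mu(a_y,k_y)$ to $n\mu(\bar a,6)$. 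Without this, your displayed chain $\E(\tilde Y_n)\ge(1-o(1))\sum_y \I(H_{a_y})$ simply does not follow from the moment lemma. (Indeed, a single Voronoi cell can perfectly well have more than six sides and moment strictly below $\I(H_{a_y})$.) The actual argument in Gruber's papers uses the full \emph{sum of moments} inequality of Fejes~T\'oth, which packages the Euler-formula step together with the moment lemma, and then a compactness-and-contradiction argument to extract the stability statement you correctly anticipate needing.
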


\begin{proof}
    We refer the reader to~\cite[Theorem~1]{Gruber2004} for the Delone properties and to~\cite[Theorem 2]{Gruber2001} for the asymptotic regularity.
\end{proof}

Obviously, property~(\ref{item:asym_reg_hex}) applies to the minimizers $Y_n$ themselves and this will be enough until Section~\ref{sec:stability} when we study stability.

The $n^{-1/2}$-Delone property has some immediate consequences we wish to highlight.

\begin{corollary}
    Say $\M$ is compact and without boundary. Let $Y_n$ as in the previous theorem and let $D_y$ denote the Voronoi cell of $y\in Y_n$ with respect to $Y_n$. There exist constants $D_\pm, A_\pm, K, L > 0$ not depending on $n$ such that for every $y \in Y_n$
    \[
        \frac{D_-}{\sqrt n} \le \diam_\M(D_y) \le \frac{D_+}{\sqrt n},
        \qquad
        \frac{A_-}{n} \le \sigma_\M(D_y) \le \frac{A_+}{n}.
    \]
    Moreover, each $y \in Y_n$ has at most $K$ Voronoi neighbors and no more than $L$ Voronoi cells meet at any vertex.
    \label{cor:Delone}
\end{corollary}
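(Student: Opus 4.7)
The plan is to derive all four assertions directly from the $n^{-1/2}$-Delone property of Theorem~\ref{thm:Gruber}, together with the standard Riemannian volume comparison available on the compact manifold $\M$. Abbreviate $\rho = C/\sqrt n$ and $R = C'/\sqrt n$ for the separation and covering radii. Since $\M$ is a smooth compact Riemannian surface, there exist $r_0 > 0$ and constants $0 < c_- \le c_+$ (independent of $n$) such that $c_- r^2 \le \sigma_\M(B(x,r)) \le c_+ r^2$ for every $x \in \M$ and every $r \in (0, r_0]$; for all sufficiently large $n$ we have $R < r_0$, so this two-sided bound applies to every ball I will invoke.

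For the diameter bounds, the upper bound is immediate: any $x \in D_y$ satisfies $\varrho_\M(x,y) = \min_{y'} \varrho_\M(x,y') \le R$, so $\diam_\M(D_y) \le 2R$. For the lower bound, $y$ must have at least one Voronoi neighbor $y'$ (since $n \ge 2$ and $\M$ is connected forces $D_y \subsetneq \M$); then any $x \in D_y \cap D_{y'}$ satisfies $\varrho_\M(x,y) = \varrho_\M(x,y') \ge \varrho_\M(y,y')/2 \ge \rho/2$ by the triangle inequality and separation, so $\diam_\M(D_y) \ge \rho/2$. The area bounds follow by inclusion: separation forces $B(y,\rho/2) \subset D_y \subset B(y, 2R)$, and volume comparison then gives $c_-(\rho/2)^2 \le \sigma_\M(D_y) \le c_+(2R)^2$.

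For the combinatorial bounds I would use straightforward packing estimates. If $y'$ is a Voronoi neighbor of $y$ and $x \in D_y \cap D_{y'}$, then $\varrho_\M(y,y') \le \varrho_\M(y,x) + \varrho_\M(x,y') \le 2R$, so every neighbor of $y$ lies in $B(y, 2R)$; by separation, the balls $B(y', \rho/2)$ indexed over these neighbors are pairwise disjoint and contained in $B(y, 2R + \rho/2)$, yielding $K \cdot c_-(\rho/2)^2 \le c_+(2R + \rho/2)^2$, a bound independent of $n$. An identical argument controls the vertex degree: if cells $D_{y_1},\dots,D_{y_L}$ all contain a common vertex $v$, then $\varrho_\M(v,y_j) \le \diam_\M(D_{y_j}) \le 2R$, so disjoint $\rho/2$-balls around the $y_j$ fit inside $B(v, 2R + \rho/2)$, and one concludes $L \le (c_+/c_-)(1 + 4R/\rho)^2 = (c_+/c_-)(1 + 4C'/C)^2$.

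The whole argument is essentially mechanical, so the only real subtlety I anticipate is justifying the uniform two-sided volume comparison on $\M$; this follows from standard Bishop--Gromov-type estimates using compactness and bounded sectional curvature, though in the flat setting that is the focus of the paper the comparison is trivial (the exponential map is a local isometry onto the Euclidean plane, so one may take $c_\pm = \pi$) and the whole argument reduces to elementary planar geometry.
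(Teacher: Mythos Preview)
Your proof is correct and follows essentially the same approach as the paper's: both derive the diameter and area bounds directly from the $n^{-1/2}$-Delone property together with a uniform two-sided volume comparison $c_- r^2 \le \sigma_\M(B(x,r)) \le c_+ r^2$ valid on the compact surface, and both obtain the bounds on $K$ and $L$ via the identical packing argument (disjoint $\rho/2$-balls around the neighbors fitting into a ball of radius $2R + \rho/2$). Your write-up is simply more explicit where the paper is terse---the paper calls the diameter bounds ``trivial consequences'' and only sketches the vertex-degree case---but there is no substantive difference in method.
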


\begin{remark}
    Because $\M$ is without boundary, the boundary of each Voronoi cell $D_y$ is just the union of its Voronoi edges and so there is no ambiguity in referring to the Voronoi edges of $D_y$ simply as edges. Furthermore, the number of edges of $D_y$ is, by definition, the number of Voronoi neighbors of $y$ in $Y_n$. Since we are only concerned here with surfaces without boundary, we will speak interchangeably of the number of Voronoi neighbors of a given point and the number of (Voronoi) edges of its cell without further comment.
\end{remark}

\begin{proof}
    The diameter bounds are trivial consequences of the $n^{-1/2}$-Delone properties of $Y_n$. The bounds on the area follow from these together with the compactness of $\M$ (assuring bounded curvature) and the fact that $\M$ has no boundary.\footnote{The argument works just as well for $\M$ with boundary so long as the boundary is sufficiently regular. In general, one must be careful about situations that affect the usual relationship between area/volume and radius of balls. E.g., if $\M$ is a region in the plane with piecewise smooth boundary but having a point where two boundary pieces meet tangentially.}

    For the upper bound on the number of edges let $C,C' > 0$ as in Theorem~\ref{thm:Gruber}, let $y \in Y_n$ and say $y_1,\dots,y_k$ are the Voronoi neighbors of $y$. We know the $y_j$ are at least $C/\sqrt n$ away from each other and so the balls $B(y_j,C/2\sqrt n)$ are all disjoint. But also $\varrho_\M(y,y_j) \le 2C'/\sqrt n$ by the triangle inequality (considering any point belonging to the edge $D_y \cap D_{y_j}$). Thus, all $k$ balls $B(y_j,C/2\sqrt n)$ are contained in the ball $B(y, \frac{2C'}{\sqrt n} + \frac{C}{2\sqrt n})$. Since $\M$ is compact and without boundary, there are constants $0 < \alpha \le 1 \le \beta$ such that any ball of sufficiently small radius $r$ has area between $\alpha\pi r^2$ and $\beta\pi r^2$. Thus, if $n$ is large enough,
    \[
        k \alpha\pi\frac{C^2}{4n}
        \le \sum_{j=1}^k\sigma_\M( B(y_j, \tfrac{C}{2\sqrt n}))
        \le \sigma_\M( B(y, \tfrac{2C'+C/2}{\sqrt n}))
        \le \frac{\beta\pi(2C'+C/2)^2}{n}
    \]
    so $k \le 4\beta(2C'+C/2)^2/\alpha C^2$.

    One argues similarly for the vertex degree bound, supposing $v \in \M$ is a common vertex of the cells of $y_1,\dots,y_l \in Y_n$ and considering appropriate balls.
\end{proof}

An intuitive, but less straightforward, consequence of Theorem~\ref{thm:Gruber} (which uses property~\ref{item:asym_reg_hex}) is the following.

\begin{lemma}
    Suppose $\M$ is compact, without boundary, and flat\footnote{The flatness assumption is not essential  but it makes the proof much more readable than it would be otherwise. In general, one works in Riemannian normal coordinates and in neighborhoods which are small enough that circles, angles, etc.\ are as close to Euclidean as required. By compactness, only finitely many charts are necessary and such neighborhoods are uniformly small across $\M$.} in the sense that it is locally isometric to the Euclidean plane. Let $Y_n$ as in the previous theorem. Then the average number of Voronoi edges per cell converges to $6$ as $n \to \infty$ and is at most $6\big(1 - \frac\chi n\big)$ for all $n$ where $\chi$ is the Euler characteristic of $\M$.
    \label{lemma:hexagon_average}
\end{lemma}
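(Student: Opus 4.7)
The plan is to combine Euler's formula for the Voronoi tessellation with the two facts already available: (i) Corollary~\ref{cor:Delone} says the vertex degrees (and face degrees) of the tessellation are uniformly bounded, and (ii) Theorem~\ref{thm:Gruber}\eqref{item:asym_reg_hex} says that all but $o(n)$ of the $y \in Y_n$ have a near-regular hexagonal cluster of six neighbors at distance $(1\pm\varepsilon_n)\varrho_n$.

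First I would establish the ``for all $n$'' upper bound by a purely combinatorial argument. Since $\M$ is closed and the Voronoi tessellation is a genuine CW-decomposition with $F = n$ faces, $V$ vertices, and $E$ edges, Euler's formula gives $V - E + F = \chi$. Each Voronoi vertex is incident to at least three cells, hence to at least three edges, so summing vertex degrees yields $3V \le \sum_v d_v = 2E$, i.e., $V \le 2E/3$. Substituting into Euler's relation gives $E \le 3(n-\chi)$. Since the sum of the number of edges of all cells is $2E$ (each edge belongs to exactly two cells), the average number of edges per cell equals $2E/n \le 6(1-\chi/n)$, which is the quantitative bound in the statement.

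Next I would prove the asymptotic statement that this average actually tends to $6$. The key step is to upgrade property~\eqref{item:asym_reg_hex} of Theorem~\ref{thm:Gruber} from ``six near-equidistant neighbors'' to ``the Voronoi cell $D_y$ has exactly six edges.'' For a non-exceptional $y$, the six points $y_1,\dots,y_6$ lie on a near-regular hexagon of side $(1\pm\varepsilon_n)\varrho_n$ around $y$, and the uniform discreteness from Theorem~\ref{thm:Gruber}(1) forces every other point of $Y_n$ to be at distance at least $(\sqrt 3 - o(1))\varrho_n$ from $y$; since the Voronoi cell associated with a perfect regular hexagonal cluster has circumradius $\varrho_n/\sqrt 3 < \varrho_n/\sqrt 3 \cdot \sqrt 3 = \varrho_n$, for $n$ large enough (so that $\varepsilon_n$ is small) no point outside the cluster can produce a bisector meeting $D_y$. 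Hence $D_y$ has exactly six edges. This elementary geometric comparison in the locally Euclidean chart (justified by flatness) is where the flatness hypothesis is used and is the one nontrivial step.

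Finally I would tally. Let $n_6$ be the number of cells with exactly six edges; by the previous step $n - n_6 = o(n)$. By Corollary~\ref{cor:Delone}, every cell has between $3$ and $K$ edges. Therefore
\[
    2E = \sum_{y\in Y_n}(\text{edges of }D_y) = 6n_6 + \sum_{D_y\text{ exceptional}}(\text{edges of }D_y) = 6n + o(n),
\]
so the average $2E/n \to 6$ as $n\to\infty$. The main obstacle is the Voronoi-comparison step above; everything else is either Euler's formula or a direct application of the already-stated results.
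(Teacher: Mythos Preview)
Your Euler-formula argument for the upper bound $\bar k \le 6(1-\chi/n)$ is correct and coincides with the paper's. For the convergence $\bar k \to 6$, however, your route is genuinely different from the paper's and, as written, contains a gap in the ``upgrade'' step.

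The claim that every point of $Y_n$ outside $\{y,y_1,\dots,y_6\}$ lies at distance at least $(\sqrt 3 - o(1))\varrho_n$ from a nice $y$ does not follow from uniform discreteness (which gives only some unspecified $C/\sqrt n$) nor from the niceness of $y$ alone (which only says $B(y,1.1\varrho_n)\cap Y_n = \{y,y_1,\dots,y_6\}$). With the constant $1.1$ as stated in Theorem~\ref{thm:Gruber}, your bisector argument does not close: the Voronoi cell of $y$ relative to $\{y,y_1,\dots,y_6\}$ alone has circumradius $\approx \varrho_n/\sqrt 3$, so a seventh point $z$ at distance just above $1.1\varrho_n$, aimed at a corner of that hexagon, could still shave off an edge of $D_y$, since $2/\sqrt 3 \approx 1.155 > 1.1$. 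Fortunately the repair is cheap: you do not need ``exactly six'' edges, only ``at least six.'' For each $y_i$, the midpoint $m_i$ of the segment $[y,y_i]$ is at distance $\tfrac12(1\pm\varepsilon_n)\varrho_n$ from both $y$ and $y_i$, at distance $(\tfrac{\sqrt3}{2}+O(\varepsilon_n))\varrho_n$ from $y_{i\pm1}$, farther still from the remaining $y_j$, and at distance $> 1.1\varrho_n - \tfrac12(1+\varepsilon_n)\varrho_n > \tfrac12(1+\varepsilon_n)\varrho_n$ from any other $z\in Y_n$. Hence $m_i$ lies on a genuine Voronoi edge $D_y\cap D_{y_i}$, so every nice $y$ has at least six Voronoi neighbors. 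Then $2E \ge 6n_6 + 3(n-n_6) = 6n - o(n)$, which combined with your upper bound yields $\bar k \to 6$.

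For comparison, the paper never attempts to certify that nice cells are hexagons. It works on the vertex side instead: it shows that any Voronoi vertex of degree $\ge 4$ must be incident to a cell whose generator is \emph{not} nice (via a circumradius argument forcing two of the incident generators to be too close), so by pigeonhole the number $v^+$ of high-degree vertices is $o(n)$; then Euler's formula with $\sum_y k_y \le 3v + Lv^+$ (using the degree bound $L$ from Corollary~\ref{cor:Delone}) gives $\bar k \ge (6-o(1))(1-\chi/n)$ directly. Your face-side approach, once patched as above, is arguably more direct.
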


\begin{remark}
    If one is willing to distort $Y_n$, say to $\tilde Y_n$, then the result trivially follows for $\tilde Y_n$ by Euler's polytope formula because one can ensure, while keeping the distortion within any desired tolerance, that the Voronoi tessellation of $\tilde Y_n$ only has degree 3 vertices. Indeed, this kind of approach is extremely common when working with Voronoi tessellations, however, such a move is not suitable here because the central quantity of interest, the number of defects in the tessellation $\df(Y_n)$, might be sensitive even to arbitrarily small distortions in case $Y_n$ so happens to be rather unfortunate, a possibility we cannot \emph{prima facie} rule out.
\end{remark}

\begin{proof}
    The upper bound of $6 - \frac\chi n$ follows immediately from Euler's polytope formula. 
    Let $\chi$ be the Euler characteristic of $\M$, let $v,e$ be the total number of Voronoi vertices and edges, respectively, let $k_y$ be the number of vertices of of the cell $D_y$, and let $\bar k = \frac1n\sum_y k_y$ be the average. Since at least three cells meet at each vertex, $3v \le \sum_y k_y = n\bar k$. By Euler,
    \[
        \chi 
        = v-e+n
        \le \frac13 n\bar k - \frac12 n\bar k + n = -\frac16 n\bar k + n
    \]
    and so $\bar k \le 6\big(1 - \frac\chi n\big)$. 

    To obtain a lower bound which will show the convergence to $6$, we need to work harder.
    Say a Voronoi vertex is of high degree if it has degree $>3$ and say a point $y \in Y_n$ is \emph{nice} if it witnesses the regularity property (3) of Theorem~\ref{thm:Gruber}. The idea is to show that at least one of the cells incident to any high-degree vertex must not be nice (intuitively, because it has an acute internal angle at that vertex). Gruber's result is that all but $o(n)$ many points are nice, and so we'll have that at most $o(n)$ many Voronoi vertices (out of $\Omega(n)$ many total vertices) are high degree. The lemma then follows from an Euler's polytope formula argument.

    Let $v_d$ be the total number of vertices of degree $d$ and let $v^+ = \sum_{d\ge 4} v_d = v-v_3$ be the number of high degree vertices. To make the proof more readable, we have suppressed the dependence on $n$ in the notations $e,v,v_d,v^+,\bar k$, but bear it in mind. 

    First, let's see that if $v^+/v \to 0$ as $n \to \infty$, then the conclusion of the lemma follows.
    By the previous corollary, there is a uniform upper bound $L$ on the vertex degree for all $n$. Assuming $v^+/v \to 0$, take $\gamma >0$ as small as desired and $n$ large enough that $v^+/v \le \gamma/L$. Then
    \[
        n\bar k
        = \sum_y k_y
        = \sum_{d\ge 3} dv_d
        = 3v_3 + \sum_{d\ge4} dv_d
        \le 3v + Lv^+
        \le (3+\gamma)v.
    \]
    Applying Euler's polytope formula, we find $\chi = v -e + n \ge \frac1{3+\gamma}n\bar k - \frac12 n\bar k + n = -n\bar k\frac{1+\gamma}{6+2\gamma} + n$ and finally $\bar k \ge \frac{6 + 2\gamma}{1+\gamma}\big(1 - \frac\chi n\big) = \big(6 - \frac{4\gamma}{1+\gamma}\big)\big(1 - \frac\chi n\big)$. Together with the upper bound $\bar k \le 6\big(1 - \frac\chi n\big)$, we have $\bar k \to 6$ as $n \to \infty$. 

    So we need to show that $v^+/v \to 0$, i.e., the proportion of high degree vertices tends to zero. Observe that $Lv \ge \sum_y k_y \ge 3n$ and so $v \ge \frac{3n}{L} = \Omega(n)$, hence it suffices to show $v^+ = o(n)$. 

    By Corollary~\ref{cor:Delone}, we may assume that $n^{-1/2}$ is much less than the diameter of $\M$ so that, in particular, each Voronoi vertex and its neighboring points of $Y_n$ lie in a small region isometric to the Euclidean plane. Suppose $w \in \M$ is a vertex of degree $\ge 4$. Then, by definition, there are $y_1^w,\dots,y_l^w \in Y_n$ for $l \ge 4$ all equidistant from $w$, say at distance $R_w = \varrho_\M(w,y_i^w)$, and there are no points of $Y_n$ strictly closer to $w$. Let $C_w$ be the (Euclidean) circle centered at $w$ of radius $R_w$, thus passing through the $y^w_i$. Say that $y_1^w,\dots, y_l^w$ are numbered in order as they appear around one circuit of $C_w$. They partition $C_w$ into $l$ disjoint arcs, say $s_i$ is the length of the arc joining $y_i^w$ to $y_{i+1}^w$ and assume, without loss of generality, that $s_1$ is the least among $s_1,\dots,s_l$. In particular, $s_1$ cannot exceed a quarter the circumference of $C_w$ and so $\varrho_\M(y_1^w, y_2^w) \le \sqrt2 R_w$. Up to some choices which can be made arbitrarily, the above determines a map $w \mapsto y_1^w$.

    Recall Theorem~\ref{thm:Gruber} says that all but $o(n)$ many $y \in Y_n$ are nice. Our goal is to prove $v^+ = o(n)$ as well. Suppose, toward contradiction, that for some constant $\kappa > 0$, there are infinitely many $n$ for which $v^+ \ge \kappa n$. For the rest of the proof, we consider only such $n$. By the previous corollary, the number of vertices of each cell is uniformly bounded by a constant $K$. So by the pigeonhole principle, as $w$ varies over the $v^+ \ge \kappa n$ many high-degree vertices, the image of the map $w \mapsto y_1^w$ ranges over at least $\kappa n/K$ \emph{distinct} points of $Y_n$. So if $n$ is large enough, most of these (in particular at least one) are nice. Fix any such $w$ with $y_1^w$ nice. Having now fixed $w$ and the $y_i^w$, let's simplify the notation that follows by writing $y = y_1^w$ and $y' = y_2^w$. Again, $y$ and $y'$ are special among the points of $Y_n$ whose cells contain $w$ for having $\varrho_\M(y,y') \le \sqrt2 R_w$. By niceness, there are $z_1,\dots,z_6 \in Y_n$ such that $Y_n \cap B(y, 1.1\varrho_n) = \{y, z_1,\dots,z_6\}$ and all the $\varrho_\M(y,z_j)$ and $\varrho_\M(z_j,z_{j+1})$ are $(1\pm\varepsilon_n)\varrho_n$ where $\varepsilon_n,\varrho_n$ are as in Theorem~\ref{thm:Gruber} (in particular, $\varepsilon_n \to 0$).

    Observe that $R_w$ cannot be too large compared to $\varrho_n$, lest one of the $z_j$ lie in the interior of $B(w,R_w)$. On the other hand, if $R_w$ is too small relative to $\varrho_n$, then $y'$ will be so close to $y$ that niceness is violated. We'll see presently that the former forces the latter, thus obtaining a contradiction. Let $\varepsilon > 0$ small enough that $(1+\varepsilon)\sqrt{2/3} < 1$. Fix $\delta > 0$ small enough (say $\frac1{1-\delta} \le 1+\varepsilon/2$) and, subsequently, $n$ large enough that $\frac{1+\varepsilon_n}{1-\delta} \le 1+\varepsilon$. Take $\vartheta > 0$ small enough that $\cos(\frac\pi6 + \vartheta) \ge (1-\delta)\frac{\sqrt 3}{2}$. Note that $\varepsilon$, $\delta$, and $\vartheta$ do not depend on $n$ or $w$ or any other peculiarities, they are universal constants. Consider the angle $\alpha_j = \angle z_jy z_{j+1}$. This angle is as large as it can possibly be if $\varrho_\M(y,z_j) = \varrho_\M(y,z_{j+1}) = (1-\varepsilon_n)\varrho_n$ and $\varrho_\M(z_j,z_{j+1}) = (1+\varepsilon_n)\varrho_n$ in which case, $\sin\frac{\alpha_j}{2} = \frac12\frac{1+\varepsilon_n}{1-\varepsilon_n}$. Since $\varepsilon_n \to 0$, we can therefore ensure that $\alpha_j \le \alpha^* \coloneqq \frac\pi3 + 2\vartheta$ by taking $n$ large enough. 

    Summarizing so far, each cyclically consecutive pair $z_j,z_{j+1}$ are separated by an angle of no more than $\alpha^*$ with respect to $y$. This puts an upper bound on $R_w$ for if the interior of $B(w,R_w)$ contains an arc of angle $> \alpha^*$ of the circle centered at $y$ of radius $(1+\varepsilon_n)\varrho_n$, then $B(w,R_w)$ necessarily contains some $z_j$. The extremal situation, with $R_w$ taking the maximum possible value $R^*$, is illustrated in Figure~\ref{fig:high_deg_vertex}. At least one $z_j$ lies in the shaded region of the diagram (possibly on its boundary) so if $R_w$ is any larger, then this region is interior to $B(w,R_w)$. We obtain $\frac{(1+\varepsilon_n)\varrho_n}{2R^*} = \cos(\alpha^*/2) = \cos(\frac\pi6 + \vartheta) \ge (1-\delta)\frac{\sqrt{3}}{2}$ and thus $R_w \le R^* \le \frac{1+\varepsilon_n}{1-\delta} \frac{\varrho_n}{\sqrt 3} \le (1+\varepsilon)\frac{\varrho_n}{\sqrt 3}$. 
    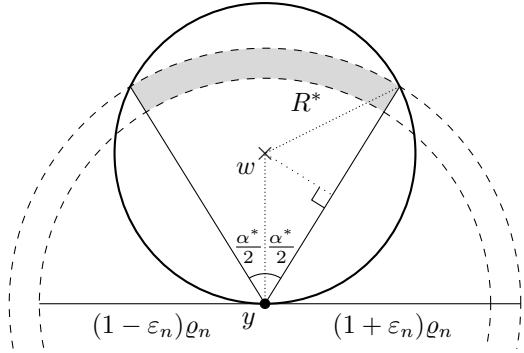
\begin{figure}[ht]
        \begin{tikzpicture}[scale=2]
            \pgfmathsetmacro\l{1.7}
            \clip (-\l-.1,-.3) rectangle (\l+.1,2.1);
            \coordinate (w) at (0,1);
            \coordinate (y) at (0,0);
            \coordinate (p+) at ({sqrt(\l^2-\l^4/4)},\l^2/2);
            \coordinate (p-) at ({-sqrt(\l^2-\l^4/4)},\l^2/2);
            \coordinate (q) at ($(y)!.5!(p+)$);
            \draw [thick] (w) circle (1);
            \fill (y) node[below left]{$y$} circle (1pt);
            \node at (w) {$\times$};
            \node[below left] at (w) {$w$};
            \draw [dashed] (y) circle (\l);
            \draw [dashed] (y) circle (\l-.2);
            \draw (p-) -- (y) -- (p+);
            \draw[densely dotted] (y) -- (w) -- node[midway,above left]{$R^*$} (p+);
            \draw (y) -- node[midway, below] {$(1+\varepsilon_n)\varrho_n$} (\l,0);
            \draw (y) -- node[midway, below] {$(1-\varepsilon_n)\varrho_n$} (-\l+.2,0);
            \draw [dotted] (w) -- (q);
            \draw pic[draw, angle radius=.4cm, angle eccentricity=2, "$\tfrac{\alpha^*}{2}$"] {angle=p+--y--w};
            \draw pic[draw, angle radius=.4cm, angle eccentricity=2, "$\tfrac{\alpha^*}{2}$"] {angle=w--y--p-};
            \draw pic[draw, angle radius=.2cm] {right angle=y--q--w};
            \pgfmathsetmacro\a{atan(\l^2/2/sqrt(\l^2-\l^4/4))}
            \fill[black, opacity=.15] ($(y)!{1-.2/\l}!(p+)$) -- (p+) arc (\a:180-\a:\l) -- ($(y)!{1-.2/\l}!(p-)$) arc (180-\a:\a:\l-.2);
        \end{tikzpicture}
        \caption{Extremal situation near nice $y \in Y_n$ with high degree vertex $w$. By niceness, some point of $Y_n$ must lie in the shaded region (possibly on its boundary). Meanwhile, since $w$ is a vertex of the cell $D_y$, no point of $Y_n$ can lie on the interior of the solid circle (center $w$, radius $R_w = R^*$). This lets us calculate the maximum possible radius $R^*$.}
        \label{fig:high_deg_vertex}
    \end{figure}

    Remember that $y'$ is less than a quarter way around $C_w$ from $y$, so $\varrho_\M(y,y') \le \sqrt 2 R_w \le (1+\varepsilon)\sqrt{\frac23}\varrho_n$. This is definitely $\le \varrho_n$, hence $y'$ is among the $z_j$. But then $\varrho_\M(y,y') \ge (1-\varepsilon_n)\varrho_n$. So
    \[
        (1-\varepsilon_n)\varrho_n
        \le \varrho_\M(y,y')
        \le (1+\varepsilon)\sqrt{\frac23}\varrho_n
    \]
    and $\varepsilon_n\ge 1 - (1+\varepsilon)\sqrt\frac23 > 0$, thereby contradicting the fact that $\varepsilon_n \to 0$. Therefore, there can be no such positive constant $\kappa$ and so $v^+ = o(n)$. Having already shown that $v = \Omega(n)$, this proves the claim that $v^+/v \to 0$.
\end{proof}

\subsection{The regular polygon moment \texorpdfstring{$\m(a,k)$}{m(a,k)}}
\label{sec:polygon_moment}
Here we introduce a key function, the regular polygon moment $\m(a,k)$, and comment on its important properties. 

For $a > 0$ and $k$ an integer $\ge 3$, let $R(a,k) \subset \R^2$ be (the interior of) a regular $k$-gon of area $a$ centered at the origin and define
\[
    \m(a,k) = \int_{R(a,k)} \norm x^r\,dx
    = \frac{k}{\frac r2 + 1}\Big(\frac{a}{k\tan\frac\pi k}\Big)^{\frac r2 + 1} \int_0^{\pi/k} \frac{1}{\cos^{r+2}\theta}\,d\theta.
\]
The explicit formula on the right is straightforward to obtain and is perfectly well defined for non-integer $k \ge 3$. Thus, we take this formula as the definition of $\m : (0,\infty) \times [3,\infty) \to \R$. The polygon moment has the following key properties we will use freely in our arguments to come.
\begin{enumerate}
    \item (Monotonicity in sections.) $\m(a,k)$ is increasing in $a$ for each fixed $k$ (obviously) and decreasing in $k$ (toward the disc limit $\int_{\norm{x}\le \sqrt{\smash[b]{a/\pi}}} \norm x^r\,dx > 0$) for each fixed $a$.
    \item (Fejes T\'oth's moment lemma.) If $P \subset \R^2$ is any convex polygonal region with $k$ sides and area $a$, then
        \[
            \int_P \norm x^r\,dx \ge \m(k,a),
        \]
        that is, the $\norm\cdot^r$ moment of any convex polygon (with respect to any origin after a translation) is no less than the moment of the regular polygon of the same area and having the same number of sides. See~\cite{Toth1973} for a proof.
    \item (Uniform decay as $a \to 0$ of least eigenvalue of the Hessian.) In \cite{Gruber1999}, Gruber proves convexity of $\mu$ by showing positive definiteness of the Hessian. We require a strengthening of this result, namely, we want an explicit lower bound on the least eigenvalue of the Hessian near $a=0$ which holds \emph{uniformly} for all $k$ in any given compact interval. We make this precise in the following lemma.
\end{enumerate}

\begin{lemma}
    Let $\lambda_0 = \lambda_0(a,k)$ be the least eigenvalue of the Hessian of $\mu$. For any $K \ge 3$, there are constants $\varepsilon_K,C_K,C_K' > 0$ such that $C_Ka^{\frac r2+1} \le \lambda_0 \le C_K'a^{\frac r2+1}$ for all $k \in [3,K]$ whenever $a < \varepsilon_K$.
    \label{lemma:eigenvalue_decay}
\end{lemma}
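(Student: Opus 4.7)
The key observation is that the explicit formula for $\m(a,k)$ separates multiplicatively in $a$: writing $\m(a,k)=a^{\frac r2+1}g(k)$ with
\[
g(k)=\frac{k}{\frac r2+1}\bigl(k\tan\tfrac{\pi}{k}\bigr)^{-\frac r2-1}\int_0^{\pi/k}\sec^{r+2}\theta\,d\theta,
\]
the Hessian of $\m$ takes the form
\[
H(a,k)=\begin{pmatrix}\tfrac{r(r+2)}{4}\,g(k)\,a^{\frac r2-1} & \tfrac{r+2}{2}\,g'(k)\,a^{\frac r2}\\[2pt]\tfrac{r+2}{2}\,g'(k)\,a^{\frac r2} & g''(k)\,a^{\frac r2+1}\end{pmatrix}.
\]
Consequently the trace is of order $a^{\frac r2-1}$ with leading coefficient $\tfrac{r(r+2)}{4}g(k)$, while the determinant has the clean expression $\det H=D(k)\,a^{r}$, where $D(k)\coloneqq\tfrac{r(r+2)}{4}g(k)g''(k)-\bigl(\tfrac{r+2}{2}\bigr)^{2}g'(k)^{2}$. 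This decoupling of the $a$-scaling (through the constant exponents $\tfrac r2-1,\tfrac r2,\tfrac r2+1$) from the $k$-dependence (through $g,g',g''$) is the backbone of the argument.

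I would then invoke Gruber's positive definiteness result~\cite{Gruber1999} to conclude $g(k)>0$ and $D(k)>0$ for every $k\ge 3$. Since $k\mapsto g(k),g'(k),g''(k)$ are continuous on $[3,K]$ (by routine differentiation under the integral sign, the domain $[0,\pi/k]$ remaining uniformly bounded away from the singularity of $\sec$ at $\pi/2$), the function $D$ is itself continuous on $[3,K]$. Compactness then upgrades pointwise positivity to uniform bounds $0<D_-\le D(k)\le D_+$ and $0<g_-\le g(k)\le g_+$ for all $k\in[3,K]$.

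Extracting $\lambda_0$ from the characteristic equation $\lambda^{2}-(\mathrm{tr}\,H)\lambda+\det H=0$, the crucial quantitative point is that as $a\to 0^{+}$ the trace is of order $a^{\frac r2-1}$ while the determinant is of order $a^{r}$, so $4\det H/(\mathrm{tr}\,H)^{2}=O(a^{2})$ uniformly in $k\in[3,K]$. Taylor-expanding the square root then gives
\[
\lambda_{0}=\frac{\det H}{\mathrm{tr}\,H}\bigl(1+O(a^{2})\bigr)=\frac{D(k)}{\tfrac{r(r+2)}{4}\,g(k)}\,a^{\frac r2+1}\bigl(1+O(a^{2})\bigr),
\]
again with uniform $O$-constant on $[3,K]$. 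Choosing $\varepsilon_K>0$ small enough that the multiplicative error stays within $[\tfrac12,2]$, the constants $C_K,C_K'$ are read off directly from the uniform bounds on $D$ and $g$.

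The main obstacle, and really the only non-automatic input, is the uniform positive lower bound $D(k)\ge D_->0$: Gruber's cited work gives only pointwise positivity, so the continuity-compactness argument above is what upgrades it to something uniform. Everything else (Leibniz differentiation under the integral, the square-root expansion, boundedness of $g$) is routine.
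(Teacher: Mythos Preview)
Your proposal is correct and follows essentially the same route as the paper: both exploit the separable form $\mu_{aa}=a^{r/2-1}A(k)$, $\mu_{ak}=a^{r/2}B(k)$, $\mu_{kk}=a^{r/2+1}C(k)$, invoke Gruber's positivity of the Hessian determinant, upgrade pointwise positivity to uniform bounds via continuity and compactness on $[3,K]$, and then Taylor-expand the square root in the quadratic formula to isolate the $a^{r/2+1}$ scaling of $\lambda_0$. The only cosmetic difference is that the paper factors out $\mu_{aa}-\mu_{kk}$ and expands $\sqrt{1+(\cdot)^2}$ to fourth order with Lagrange remainder, whereas you write $\lambda_0=\frac{\det H}{\operatorname{tr} H}(1+O(a^2))$ directly; these are equivalent manipulations.
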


\begin{remark}
    The conclusion of this lemma is a strictly stronger assertion than merely $\lambda_0(a,k) = \Theta(a^{\frac r2+1})$ as $a \to 0$ for each $k \in [3,K]$ because we are further claiming that \emph{the same constants} $\varepsilon_K, C_K, C_K'$ (depending only on $K$) witness this asymptotic relation uniformly for all $k \in [3,K]$. In other words, the constants $C_K,C_K'$ do the job on the entire rectangle $(0,\varepsilon_K] \times [3,K]$. 
\end{remark}

\begin{proof}
    Following Gruber's calculation\footnote{His ``lengthy calculation'' on \cite[p.~294]{Gruber1999} has some typos but they are all easily fixed and the final line is correct, as the reader is welcome to verify. For those who wish to do so, we mention that the only step he omits which we feel is not immediately clear is, in his notation, the equality $\frac{2a\cos\frac\pi v}{v\sin\frac\pi v}\int_0^{\pi/v} g'\Big(\frac{h}{\cos^2\frac\pi v}\Big)\frac{\sin^2\psi}{\cos^4\psi}\,d\psi = \frac{\sin\frac\pi v}{\cos\frac\pi v}K - I$ but this is merely an integration by parts exercise.} from \cite{Gruber1999} in the special case of $f(t) = t^r$, one finds that the second partial derivatives of $\mu$ are of the form 
    \[
        \mu_{aa} = a^{\frac r2-1}A(k),
        \qquad \mu_{ak} = a^{\frac r2}B(k),
        \qquad \mu_{kk} = a^{\frac r2+1}C(k)
    \]
    and the determinant of Hessian is 
    \[
        \mu_{aa}\mu_{kk} - \mu_{ak}^2 = a^rD(k)
    \]
    where $A,B,C,D$ are each continuous on $[3,\infty)$ and do not depend on $a$. Furthermore, $A$ and $D$ are strictly positive. (This of course suffices to demonstrate positive definiteness.) 

    Now fix $K \ge 3$. Letting $a^* = \big(\frac{\inf_{[3,K]}A}{\sup_{[3,K]}\abs C}\big)^{1/2} > 0$ (where this is understood as $a^* = +\infty$ if $\sup_{[3,K]} \abs C = 0$), we have that $\mu_{aa} - \mu_{kk} = a^{\frac r2-1}(A - a^2C) > 0$ whenever $a < a^*$. Hereafter, assume $a < a^*$.

    Considering the characteristic polynomial of the Hessian of $\mu$, we obtain an explicit expression for $\lambda_0$ in terms of the derivatives of $\mu$ using the quadratic formula:
    \begin{align*}
        2\lambda_0
        &= (\mu_{aa} + \mu_{kk}) - \sqrt{(\mu_{aa} + \mu_{kk})^2 - 4(\mu_{aa}\mu_{kk} - \mu_{ak}^2)}
        \\&= (\mu_{aa} + \mu_{kk}) - \sqrt{(\mu_{aa} - \mu_{kk})^2 + 4\mu_{ak}^2}
        \\&= (\mu_{aa} + \mu_{kk}) - (\underbrace{\mu_{aa} - \mu_{kk}}_{>0})\sqrt{1 + \Big(\frac{2\mu_{ak}}{\mu_{aa}-\mu_{kk}}\Big)^2}
        \\&= a^{\frac r2-1}\bigg[ (A + a^2C) - (A - a^2C)\sqrt{1 + \Big(\frac{2B a}{A-a^2C}\Big)^2}\bigg]
    \end{align*}
    Define $F : (-a^*,a^*)\times[3,K] \to \R$ by $F(a,k) = \sqrt{1 + \big(\frac{2Ba}{A-a^2C}\big)^2}$. Since $A > 0$, it follows that $F(\cdot,k)$ is four times continuously differentiable (in fact, analytic) at $a = 0$ for each fixed $k$. By Taylor's theorem (for functions of one variable), for each $a \in [0,a^*/2]$,
    \[
        F(a,k) = 1 + \frac{2B^2}{A^2}a^2 + \frac{1}{4!}F_{aaaa}(\xi,k)a^4
    \]
    for some $\xi \in (0,a) \subset [0,a^*/2]$ (which may depend on $k$). Substituting this into the preceding equation and combining like terms in $a$ yields
    \[
        2\lambda_0
        = a^{\frac r2-1}\bigg[ \frac2A(AC - B^2)a^2 - \frac1{4!}(A-a^2C)F_{aaaa}(\xi,k)a^4 \bigg]
    \]
    Now, $F_{aaaa}$ is continuous (as a function of two variables), hence bounded on $[0,a^*/2]\times [3,K]$. Recall, $A$ and $C$ are also continuous. So take 
    \[
        \alpha = \frac1{4!}\sup_{(a,k) \in [0,a^*/2]\times[3,K]} \abs{A-a^2C}\cdot \sup_{(\xi,k)\in[0,a^*/2]\times[3,K]}\abs{F_{aaaa}(\xi,k)}
        < \infty
    \]
    and note that $\alpha$ depends only on $K$. 

    Observe also that $AC - B^2$ is just a rescaling of the determinant of the Hessian, namely, $AC - B^2 = a^{-r}(\mu_{aa}\mu_{kk}-\mu_{ak}^2) = D(k)$. Recall that $D$ and $A$ are both strictly positive and continuous on $[3,\infty)$ and thus, $0 < \beta \le \frac DA \le \gamma < \infty$ where $\beta = \frac{\inf_{[3,K]} D}{\sup_{[3,K]}A}$ and $\gamma = \frac{\sup_{[3,K]} D}{\inf_{[3,K]} A}$ depend only on $K$.

    In all, we have that
    \[
        a^{\frac r2-1}\big( 2\beta a^2 - \alpha a^4 \big)
        \le 2\lambda_0
        \le a^{\frac r2-1}\big( 2\gamma a^2 + \alpha a^4 \big)
    \]
    provided $a \le a^*/2$. Finally, if $a^2 \le \beta/\alpha$ (where again, this imposes no restriction at all if it should happen that $\alpha = 0$), then $\beta a^2 \le 2\beta a^2 - \alpha a^4$ and $2\gamma a^2 + \alpha a^4 \le 3\gamma a^2$ and thus $\frac12 \beta a^{\frac r2+1} \le \lambda_0 \le \frac32\gamma a^{\frac r2+1}$ whenever $a \le \min\{a^*/2, \sqrt{\beta/\alpha}\}$. 
\end{proof}

\subsection{Interpolating between partial asymptotic upper bounds}
\label{sec:interpolation_lemma}
The last tool we need before moving on to the main results is a means of asymptotically estimating a function on the natural numbers for which we only have partial information. Looking ahead a little, we will produce by constructive methods explicit asymptotic upper bounds for the minimum $\E(Y_n)$ but only on a certain infinite family of values of $n$, not on all $n$. Nonetheless, we can use the obvious fact that $\E(Y_n)$ is non-increasing as a function of $n$ to ``fill in the gaps'', as it were, by appealing to the following lemma.

To state the lemma, we must introduce the notion of a \emph{gap function}. 
For an unbounded set $S$ of natural numbers, if $s \in S$, write $s^+$ for the next highest element of $S$ above $s$, i.e., $s^+ = \min S\setminus [0,s]$. Define 
\[
    \text{$\gap_S(n) = s^+-s$ where $s \in S$ is such that $n \in [s,s^+)$}.
\]
We'll say an unbounded set $S$ satisfies the \emph{small gap condition} if $\gap_S(n) = o(n)$ as $n \to \infty$.

For an arbitrary set $S$, the gap function $\gap_S$ may be very erratic. Sometimes, it is convenient to consider instead $\maxgap_S(n) = \max_{m\le n}\gap_S(m)$. Obviously, $\maxgap_S$ is non-decreasing and $\gap_S \le \maxgap_S$. Moreover, if $\gap_S \le u$ for some non-decreasing $u$, then $\maxgap_S \le u$. 

\begin{lemma}[{$\frac1n + O(e(n))$ interpolation}]
    Let $F : \N \to \R^+$ be a non-increasing function. Suppose $S\subseteq \N$ satisfies the small gap condition and that
    \[
        F(n) = \frac1n + O(e(n))
        \quad\text{for all $n\in S$}
    \]
    for some $e : \R^+ \to \R^+$ (the ``error function'') such that
    \begin{enumerate}
        \item $e(n) = o(n^{-1})$ as $n\to\infty$,
        \item $e$ is non-increasing,
        \item\label{item:homog} $e$ has the following homogeneity property: for all $\alpha > 0$, there is $\beta > 0$ such that for all sufficiently large $n$, the inequality $e(\alpha n) \le \beta e(n)$ holds.
    \end{enumerate}
    Then,
    \[
        F(n) = \frac1n + O(\max(n^{-2}\gap_S(n), e(n)))
        \quad\text{for all $n \in \N$}.
    \]
    Moreover, this bound is sharp in the sense that possibly there is a constant $C > 0$ such that $F(n) \ge \frac1n + Cn^{-1}\gap_S(n)$ infinitely often. In other words, under these hypotheses, one cannot generally attain a $o(n^{-2}\gap_S(n))$ bound.
    \label{lemma:error_interp}
\end{lemma}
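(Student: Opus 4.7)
\smallskip
\noindent\textbf{Proof plan.} The natural approach is to sandwich $F(n)$ between $F(s)$ and $F(s^+)$ using monotonicity, where $s \in S$ is the unique element with $n \in [s, s^+)$, and then convert the known estimates at these points of $S$ into an estimate at $n$. Write $F(s) = \tfrac{1}{s} \pm C e(s)$ and $F(s^+) = \tfrac{1}{s^+} \pm C e(s^+)$ for some constant $C>0$ valid for all sufficiently large $s \in S$. Since $F$ is non-increasing and $s \le n < s^+$,
\[
    \tfrac{1}{s^+} - C e(s^+) \le F(s^+) \le F(n) \le F(s) \le \tfrac{1}{s} + C e(s).
\]
The first step is then to control the two reciprocal discrepancies: since $s^+ - s = \gap_S(n)$, $s \le n \le s^+$, and $s^+ \ge n$,
\[
    \tfrac{1}{s} - \tfrac{1}{n} = \tfrac{n-s}{sn} \le \tfrac{\gap_S(n)}{sn},
    \qquad
    \tfrac{1}{n} - \tfrac{1}{s^+} = \tfrac{s^+ - n}{ns^+} \le \tfrac{\gap_S(n)}{n s^+}.
\]

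\smallskip
The second step is to show $s \sim n$ so that the above right-hand sides are $O(\gap_S(n)/n^2)$ and so that the error terms $e(s), e(s^+)$ are $O(e(n))$. The small gap hypothesis $\gap_S(n) = o(n)$ gives $s \ge n - \gap_S(n) = (1 - o(1))n$, so eventually $s \ge n/2$; combined with $s^+ \ge n$, this yields $\tfrac{1}{sn}, \tfrac{1}{ns^+} = O(n^{-2})$. For the error function, monotonicity of $e$ gives $e(s^+) \le e(n)$ directly, while $e(s) \le e(n/2)$, and the homogeneity property (\ref{item:homog}) applied with $\alpha = 1/2$ delivers $e(n/2) \le \beta e(n)$ for some $\beta>0$ and all large $n$. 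Assembling, both $F(n) - \tfrac{1}{n}$ and $\tfrac{1}{n} - F(n)$ are bounded by $O\bigl(\gap_S(n)/n^2\bigr) + O(e(n)) = O\bigl(\max(n^{-2}\gap_S(n), e(n))\bigr)$, which is the desired estimate. (Small values of $n$ are absorbed into the big-$O$ constant.)

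\smallskip
For the sharpness claim, the plan is to exhibit an explicit $F$ satisfying the hypotheses for which $F(n) - \tfrac{1}{n}$ is of the order $\gap_S(n)/n^2$ along a subsequence. Define $F$ as a step function: $F(n) = \tfrac{1}{s}$ for $n \in [s, s^+) \cap \N$, with $s \in S$. Then $F$ is non-increasing, $F(s) = \tfrac{1}{s}$ exactly on $S$ (so the hypothesis holds with any $e$ such as $e(n) = o(n^{-1})$), and at $n = s^+ - 1$ one has $F(n) - \tfrac{1}{n} = \tfrac{n - s}{sn} = \tfrac{\gap_S(n) - 1}{sn} = \Theta(\gap_S(n)/n^2)$, witnessing sharpness up to a constant. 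The only mild subtlety, and the one place where I would be careful, is the homogeneity step: one must verify that the assumption is being applied with a fixed $\alpha$ (here $\alpha = 1/2$) that does not depend on $n$, which is indeed the case since the bound $s \ge n/2$ holds eventually as a consequence of the small gap condition alone. No other step requires more than arithmetic manipulation of the sandwich inequality.
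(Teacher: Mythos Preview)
Your proof is correct and follows essentially the same route as the paper: sandwich $F(n)$ against the nearest $S$-values using monotonicity, convert $\tfrac{1}{s}$ to $\tfrac{1}{n}$ plus a gap term bounded via $s \ge n/2$ (from the small gap condition), and control $e(s)$ via monotonicity of $e$ and homogeneity at the fixed scale $\alpha = \tfrac12$. Your sharpness example is also the same step-function construction the paper uses. The only difference is cosmetic: the paper, working under its convention that big-$O$ quantities are non-negative, treats $F(n) = \tfrac{1}{n} + O(e(n))$ as a one-sided (upper) bound and accordingly only chases $F(n) \le F(s)$, whereas you read the hypothesis two-sidedly and also run the $F(n) \ge F(s^+)$ branch---harmless extra work that would be required under the standard two-sided convention.
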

\begin{proof}
    By the small gap condition, assume $n$ is large enough that $\gap_S(n) \le n/2$. Let $s \in S$ such that $n \in [s,s^+)$. Then,
    \begin{alignat*}{2}
        F(n) &\le F(s)    &\qquad&\text{since $F$ is non-increasing},
        \\&= \frac1s + O(e(s))      &&\text{since $s \in S$},
        \\&= \frac1n + \frac{n-s}{sn} + O(e(s))
        \\&\le \frac1n  + \frac{\gap_S(n)}{(n-\gap_S(n))n} + O(e(n-\gap_S(n)))     &&\parbox{10em}{bc.\ $n < s^+ = s+\gap_S(n)$ and $e$ is non-increasing,}
        \\&\le \frac1n + 2\cdot \frac{\gap_S(n)}{n^2} + O(e(n/2))   &&\text{since $\gap_S(n) \le n/2$,}
        \\&= \frac1n + O(\max(n^{-2}\gap_S(n), e(n)))   &&\parbox{10em}{by the homogeneity property of $e$.}
    \end{alignat*}
    For the sharpness result, one need only consider the function $G$ given by $G(n) = F(s)$ for all $n \in [s,s^+)$, i.e., the extension of $F\upharpoonright S$ which is constant on each interval $[s,s^+)$. One checks that for each $n$ of the form $n = s^+-1$, one has $G(n) \ge \frac1n + \frac{\gap_S(n)}{n^2}$. Since there are infinitely many such $n$ and $G$ itself satisfies the hypotheses on $F$, this yields a sufficient example.
\end{proof}

\begin{remark}
    The lemma as stated above is more general than we will require in what follows. In particular, on the hexagonal torus, it will turn out that the error function $e$ is identically zero. Aside from the fact the general proof is elementary, we feel the more general lemma is worth stating because it is the core of the method. The problem of producing tight upper bounds on $\E(Y_n)$ \emph{for all $n$} is very difficult for myriad topological and geometric reasons. The interpolation lemma allows us to consider only ``good'' $n$ for which we can exercise tight control over the error, so long as they do not thin out too much as $n \to \infty$.
\end{remark}

\section{The hexagonal torus}
\label{sec:torus}
For any lattice $\Lambda$ in $\R^2$, the quotient $\T = \R^2/\Lambda$ is a \emph{flat torus}, a parallelogram with its opposite sides identified, as depicted below. We consider in this section the so-called \emph{hexagonal} torus where $\Lambda$ is spanned by $(1,0)$ and $(\frac12, \frac{\sqrt3}{2})$. Two ways of conceiving of this surface are shown in Figure~\ref{fig:hex_torus}. (The parallelogram is a rhombus with acute angle $\pi/3$.) The arguments that follow generalize to a boarder class of flat tori with certain aspect ratios.
\begin{figure}[ht]
    \begin{tikzpicture}[scale=1.4, thick]
        \begin{scope}[xshift=0cm]
            \foreach \i in {1,...,6}
            \coordinate (\i) at (60*\i:1);
            \fill[green!20] (6) -- (1) -- (3) -- (5) -- cycle;
            \fill[red!20] (1) -- (2) -- (3) -- cycle;
            \fill[blue!20] (5) -- (4) -- (3) -- cycle;
            \draw[thick, postaction={decorate}, >={To[scale=.9]},
                decoration={markings, mark=at position .1 with {\arrow{>}}},
                decoration={markings, mark=at position .27 with {\arrow{>>}}},
                decoration={markings, mark=at position .44 with {\arrow{>>>}}},
                decoration={markings, mark=at position .59 with {\arrow{<}}},
                decoration={markings, mark=at position .76 with {\arrow{<<}}},
                decoration={markings, mark=at position .94 with {\arrow{<<<}}},
            ]
                (1) -- (2) -- (3) -- (4) -- (5) -- (6) -- cycle;
                \draw[dashed, postaction={decorate}, decoration={markings, mark=at position .56 with {\arrow{Triangle}}}] (1) -- (3);
                \draw[dashed, postaction={decorate}, decoration={markings, mark=at position .6 with {\arrow{Triangle[]Triangle[]}}}] (5) -- (3);
        \end{scope}
        \begin{scope}[xshift=3cm]
            \foreach \i in {1,...,6}
            \coordinate (\i) at (60*\i:1);
            \fill[green!20] (6) -- (1) -- (3) -- (5) -- cycle;
            \fill[red!20] (2,0) -- (6) -- (5) -- cycle;
            \fill[blue!20] (2,0) -- (6) -- (1) -- cycle;
            \draw[thick] (1) -- (3) -- (5) -- (2,0) -- cycle;
            \draw[dashed] (1) -- (6) -- (5);
            \draw[dashed] (6) -- (2,0);
            \begin{scope}[>={Triangle}]
                \draw decorate [decoration={markings, mark=at position .56 with {\arrow{>}}}] {(1) -- (3)};
                \draw decorate [decoration={markings, mark=at position .56 with {\arrow{>}}}] {(2,0) -- (5)};
                \draw decorate [decoration={markings, mark=at position .6 with {\arrow{>>}}}] {(5) -- (3)};
                \draw decorate [decoration={markings, mark=at position .6 with {\arrow{>>}}}] {(2,0) -- (1)};
            \end{scope}
            \begin{scope}[>={To[scale=.9]}]
                \draw decorate [decoration={markings, mark=at position .6 with {\arrow{>>>}}}] {(1) -- (6)};
                \draw decorate [decoration={markings, mark=at position .56 with {\arrow{>>}}}] {(6) -- (5)};
                \draw decorate [decoration={markings, mark=at position .6 with {\arrow{>}}}] {(2,0) -- (6)};
            \end{scope}
        \end{scope}
    \end{tikzpicture}
    \caption{Two equivalent representations of the hexagonal torus. The name obviously refers to the representation on the left but the rhombus representation on the right (particularly with side length 1) is more convenient for our purposes. Moreover, our results generalize to other parallelograms akin to this rhombus as discussed at the end of the section.}
    \label{fig:hex_torus}
\end{figure}
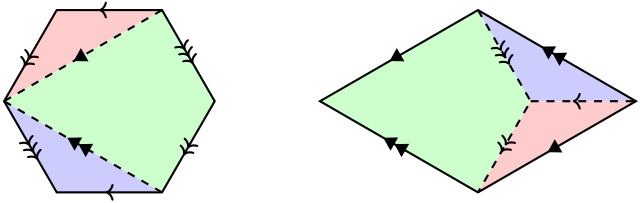

We now state the main theorem of this section and a more simply stated corollary. The proof shall be divided into several lemmas. 

\begin{theorem}
    Let $Y_n \subset \T$ with $\abs{Y_n} = n$ be a minimizer of $\E$ for each $n$. Then the number of defects in $Y_n$ is $O(\gap_\Lo(n))$ and the variance (i.e., the average squared deviation from the mean) of the areas of the Voronoi cells is $O(n^{-1}\gap_\Lo(n))$ where $\Lo = \{p^2 + pq + q^2 \mid p,q \in \N\}$ is the set of L\"oschian numbers (and $\gap_\Lo$ is as in \textsection\ref{sec:interpolation_lemma}).
    \label{thm:hex_torus}
\end{theorem}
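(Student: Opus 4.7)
The plan is to sandwich $\E(Y_n)$ between a \emph{strong}-convexity lower bound built from the regular polygon moment and an explicit upper bound furnished by the Goldberg-Coxeter construction, and then to extract both $\df(Y_n)$ and $s^2(Y_n)$ from the resulting quadratic deficit. Throughout, write $A=\sigma(\T)=\tfrac{\sqrt 3}{2}$, let $k_y$ denote the number of Voronoi edges of $D_y$, and put $\delta_y=(\sigma(D_y)-A/n,\,k_y-6)$.

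First I would set up the lower bound with a quadratic correction. Fejes T\'oth's moment lemma applied cell by cell (valid since Voronoi cells on a flat torus are convex polygons) gives $\E(Y_n)\ge \sum_y\mu(\sigma(D_y),k_y)$. By Corollary~\ref{cor:Delone}, every pair $(\sigma(D_y),k_y)$ lies in a rectangle $[A_-/n,A_+/n]\times[3,K]$, and Lemma~\ref{lemma:eigenvalue_decay} supplies a uniform lower bound of order $n^{-r/2-1}$ on the least eigenvalue of $\nabla^2\mu$ throughout this rectangle for large $n$. The resulting strong-convexity expansion of $\mu$ about the base point $(A/n,6)$ yields
\[
\sum_y\mu(\sigma(D_y),k_y) \ge n\mu(A/n,6) + \mu_a\sum_y(\sigma(D_y)-A/n) + \mu_k\sum_y(k_y-6) + \tfrac{C}{2}n^{-r/2-1}\sum_y\norm{\delta_y}^2,
\]
with partials evaluated at $(A/n,6)$. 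The first linear sum vanishes since $\sum_y\sigma(D_y)=A$; the second is nonnegative because $\mu_k<0$ (monotonicity of $\mu$ in $k$) while $\sum_y(k_y-6)\le 0$ on the torus (Lemma~\ref{lemma:hexagon_average} with $\chi(\T)=0$). Hence $\E(Y_n)\ge n\mu(A/n,6)+C'n^{-r/2-1}\sum_y\norm{\delta_y}^2$.

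Next I would match this lower bound along the L\"oschians and interpolate. For $n=p^2+pq+q^2\in\Lo$, the Goldberg-Coxeter construction (developed in the remainder of Section~\ref{sec:torus}) provides an $n$-point configuration whose Voronoi tessellation is a tiling of $\T$ by $n$ congruent regular hexagons of area $A/n$, hence with energy exactly $n\mu(A/n,6)$. So $\E(Y_n)=n\mu(A/n,6)$ on $\Lo$. For arbitrary $n$ with largest L\"oschian $s\le n$, monotonicity of $\E(Y_n)$ in $n$ gives $\E(Y_n)\le s\mu(A/s,6)$. Since $t\mu(A/t,6)=c_rA^{r/2+1}t^{-r/2}$ in closed form and $n-s<\gap_\Lo(n)=o(n)$ (a classical number-theoretic fact, precisely the small-gap hypothesis of Lemma~\ref{lemma:error_interp} with $e\equiv 0$), a routine mean-value estimate yields
\[
\E(Y_n)-n\mu(A/n,6)\le s\mu(A/s,6)-n\mu(A/n,6) = O\bigl(\gap_\Lo(n)\cdot n^{-r/2-1}\bigr).
\]
Chaining this with the lower bound cancels the $n\mu(A/n,6)$ terms and leaves $\sum_y\norm{\delta_y}^2=O(\gap_\Lo(n))$. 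Both conclusions then fall out: $\df(Y_n)=\#\{y:k_y\ne 6\}\le\sum_y(k_y-6)^2\le\sum_y\norm{\delta_y}^2=O(\gap_\Lo(n))$, using $\abs{k_y-6}\ge 1$ whenever $k_y\ne 6$; and $s^2(Y_n)=\tfrac1n\sum_y(\sigma(D_y)-A/n)^2\le\tfrac1n\sum_y\norm{\delta_y}^2=O(n^{-1}\gap_\Lo(n))$.

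The main obstacle is the strong-convexity inequality with the correct uniform Hessian bound as $a\to 0$: Gruber's bare convexity of $\mu$ only furnishes the leading-order $\E(Y_n)\ge n\mu(A/n,6)$, which is blind to the defect and variance content hiding in the next order. It is the quantitative $\lambda_0\gtrsim a^{r/2+1}$ of Lemma~\ref{lemma:eigenvalue_decay}, together with the fact that $\sigma(D_y)$ and $k_y$ are simultaneously confined (uniformly in $n$) to a rectangle on which that estimate holds, that converts ``excess energy'' into ``excess $\sum_y\norm{\delta_y}^2$.'' The remaining ingredients—signing or vanishing of the linear terms via $\bar k\le 6$ and $\sum_y\sigma(D_y)=A$, and a one-variable Taylor estimate of $h(t)=t\mu(A/t,6)$ across the short interval $[s,n]$—are routine bookkeeping.
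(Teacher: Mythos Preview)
Your proposal is correct and follows the paper's three-step strategy exactly: Fejes T\'oth's moment lemma plus strong convexity of $\mu$ (via Lemma~\ref{lemma:eigenvalue_decay} and Corollary~\ref{cor:Delone}) for the lower bound, the Goldberg--Coxeter construction (Lemma~\ref{lemma:hex_upper_bound}) for the exact upper bound along $\Lo$, and monotone interpolation (your hand-rolled version of Lemma~\ref{lemma:error_interp} with $e\equiv 0$) for general $n$.

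The one noteworthy variation is your choice of expansion point. The paper Taylor-expands $\mu$ about the empirical mean $(\bar a,\bar k)$, which kills the linear terms automatically but then forces the estimate $\sum_y(k_y-\bar k)^2\ge(1-\theta)^2\df(Y_n)$ via $|\bar k-6|\le\theta$, hence the auxiliary parameter $\lambda<1$ in Lemma~\ref{lemma:hex_lower_bound}. You expand instead about the fixed point $(A/n,6)$; the $a$-linear term still vanishes, the $k$-linear term you dispatch by the sign argument $\mu_k<0$ and $\sum_y(k_y-6)=n(\bar k-6)\le 0$, and the quadratic term gives $\sum_y(k_y-6)^2\ge\df(Y_n)$ with no slack since $k_y\in\Z$. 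This is a mild streamlining: you need only the inequality $\bar k\le 6$ from Lemma~\ref{lemma:hexagon_average}, not the full convergence $\bar k\to 6$ that the paper uses.
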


\begin{corollary}
    The minimizer $Y_n$ has no more than $O(n^{1/4})$ many defects and the variance of the areas of the Voronoi cells is at most $O(n^{-3/4})$.
    \label{cor:hex_torus_power_law_bound}
\end{corollary}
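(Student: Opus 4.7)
The plan is to deduce Corollary~\ref{cor:hex_torus_power_law_bound} from Theorem~\ref{thm:hex_torus} by substituting an appropriate bound on the gap function. The theorem gives $\df(Y_n) = O(\gap_\Lo(n))$ and $s^2(Y_n) = O(n^{-1}\gap_\Lo(n))$, so it suffices to establish the number-theoretic estimate $\gap_\Lo(n) = O(n^{1/4})$; the two parts of the corollary then follow by direct substitution. The reduction is purely formal, so all of the substantive work concerns the gap estimate, which is in the spirit of the classical Bambah--Chowla bound on gaps between sums of two squares.

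To prove $\gap_\Lo(n) = O(n^{1/4})$, I would first reduce the ternary form to the binary form $x^2 + 3y^2$ via the identity $4(p^2 + pq + q^2) = (2p+q)^2 + 3q^2$. Thus $N \in \Lo$ iff $4N = x^2 + 3y^2$ for integers $x, y$ with $x \equiv y \pmod 2$, and this parity condition is automatic because $x^2 + 3y^2 \equiv 0 \pmod 4$ forces $x$ and $y$ to have the same parity. Given large $N$, I would set $a := \lfloor 2\sqrt{N}\rfloor$ so that $r := 4N - a^2 = O(\sqrt{N})$, fix $x := a$, and let $y_1 \le y_2$ be the two consecutive non-negative integers of the same parity as $a$ which bracket $\sqrt{r/3}$. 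Since consecutive admissible values of $3y^2$ are separated by $3((y+2)^2 - y^2) = O(y)$ and $y_1, y_2 = O(N^{1/4})$, both $N_\pm := (a^2 + 3y_{1,2}^2)/4$ are Löschian numbers within $O(N^{1/4})$ of $N$ and straddle $N$. Therefore $\gap_\Lo(N) \le N_+ - N_- = O(N^{1/4})$.

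The main obstacle is precisely this gap bound. The naive observation that all squares lie in $\Lo$ gives only the weaker $O(\sqrt{N})$ gap, so the sharper $O(N^{1/4})$ bound genuinely requires the two-parameter family $x^2 + 3y^2$ together with the fact that $3y^2$, as $y$ ranges over integers of a fixed parity, takes values spaced $O(y)$ apart. I do not anticipate major technical surprises in the implementation, as the argument reduces to elementary counting once the algebraic reduction to $x^2 + 3y^2$ is in place. With the gap estimate in hand, Corollary~\ref{cor:hex_torus_power_law_bound} is a one-line consequence of Theorem~\ref{thm:hex_torus}.
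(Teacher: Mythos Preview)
Your proposal is correct and follows essentially the same route as the paper: reduce to $\gap_\Lo(n) = O(n^{1/4})$ via Theorem~\ref{thm:hex_torus}, and establish that gap bound by a Bambah--Chowla-style argument using the representation by the form $x^2 + 3y^2$. The only cosmetic difference is that the paper cites the known characterization $\Lo = \{p^2 + 3q^2 : p,q \in \N\}$ and produces a single L\"oschian number just above any given $t$, whereas you derive the equivalent description $4\Lo = \{x^2 + 3y^2 : x \equiv y \pmod 2\}$ from the identity $4(p^2+pq+q^2) = (2p+q)^2 + 3q^2$ and produce a straddling pair; your version is self-contained at the cost of the parity bookkeeping.
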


\begin{proof}[Proof of corollary]
    Obviously, assuming Theorem~\ref{thm:hex_torus}, it suffices to show $\gap_\Lo(n) = O(n^{1/4})$. We provide an elementary proof adapted from that in \cite{Shiu2013} of the Bambah-Chowla theorem (which originally appeared in \cite{Bambah1947}). A nice generalization (which includes our case) appears in \cite{Mordell1969}. 

    First, the set of L\"oschian numbers is equivalently $\Lo = \{p^2 + 3q^2 \mid p,q \in \N\}$ (see for instance \cite{Watson1979}). Now, we'll show that for every real $t \ge 1$, there are integers $p,q$ such that $t < p^2 + 3q^2 < t + 2\sqrt6 t^{1/4} + 3$. Let $p,q$ be the integers satisfying $\sqrt t -1 < p \le \sqrt t$ and $s < q \le s+1$ where $s = \sqrt{\frac{t - p^2}{3}}$. Then
    \[
        t = p^2 + 3s^2 < p^2 + 3q^2 = t - 3s^2 + 3q^2 \le t + 6s + 3
    \]
    and $3s^2 = t - p^2 < t - (\sqrt t -1)^2 = 2\sqrt t - 1 < 2\sqrt t$ hence $s < \sqrt{\frac23}t^{1/4}$.

    Now, take any $n \ge 1$ and let $\ell < \ell^+$ be the consecutive L\"oschian numbers for which $\ell \le n < \ell^+$. By the above, taking $t = \ell$, there is some L\"oschian number $\ell'$ in the interval $(\ell, \ell + 2\sqrt 6\ell^{1/4} + 3)$ and clearly $\ell < \ell^+ \le \ell'$ (since $\ell^+$ is the least L\"oschian number above $\ell$). Thus, $\gap_\Lo(n) = \ell^+ - \ell < 2\sqrt 6 \ell^{1/4} + 3 \le 2\sqrt 6 n^{1/4} + 3$. 
\end{proof}

It is speculated that $\gap_\Lo(n)$ is in fact $O(n^\delta)$ or even $O(\log n)$. Nevertheless, an upper bound of $O(n^{1/4})$, being strictly better than $O(\sqrt n)$, provides an affirmative answer to the main question in the case of the hexagonal torus.

Before proceeding, we make some remarks on the theorem.

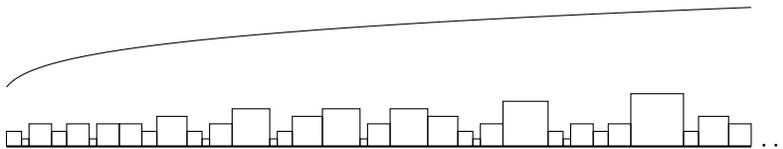
\begin{figure}[t]
    \begin{center}
        \begin{tikzpicture}[domain=1:100, scale=.1, samples=200]
            \draw[thick] plot (\x,0);
            \foreach \a/\b in {1/3, 3/4, 4/7, 7/9, 9/12, 12/13, 13/16, 16/19, 19/21, 21/25, 25/27, 27/28, 28/31, 31/36, 36/37, 37/39, 39/43, 43/48, 48/49, 49/52, 52/57, 57/61, 61/63, 63/64, 64/67, 67/73, 73/75, 75/76, 76/79, 79/81, 81/84, 84/91, 91/93, 93/97, 97/100} {
                \draw (\a,0) -- (\a,\b-\a) -- (\b,\b-\a) -- (\b,0);
            }
            \draw plot (\x,{2*sqrt(6)*pow(\x,1/4) + 3});
            \node[right] at (100,0) {$\cdots$};
        \end{tikzpicture}
    \end{center}
    \caption{Visualization of $\gap_\Lo$. The left endpoint of the horizontal axis represents $1$ and the base of each square is an interval between consecutive L\"oschian numbers. Thus, the upper edges of the squares (with their left endpoints only) form the graph of $\gap_\Lo$. The curve is the graph of of the upper bound $2\sqrt 6 n^{1/4} + 3$.}
    \label{fig:Loschian_gaps}
\end{figure}

\begin{remark}
    The function $\gap_\Lo$ (visualized in Figure~\ref{fig:Loschian_gaps}) is positive so $O(\gap_\Lo(n))$ is a sensible expression which defines a class of positive functions on $\N$. But admittedly, $\gap_\Lo$ is a rather strange function to appear in a big-O expression. In particular, it is definitely not non-decreasing. In fact, $\gap_\Lo(n) = 1$ infinitely often since $p^2 + pp + p^2 = 3p^2$ and $(p-1)^2 + (p-1)(p+1) + (p+1)^2 = 3p^2 +1$ for every $p \ge 1$ and similarly, $\gap_\Lo(n) \le k^2$ infinitely often for every $k$. If one wants a non-decreasing function, the conclusion of the theorem holds with $\gap_\Lo(n)$ replaced by $\max_{m\le n} \gap_\Lo(m)$ but we encourage the reader to take onboard the result involving the honest gap function, even if it is a somewhat unorthodox big-O assertion. Just take care when working with expressions like $O(\gap_\Lo(n))$.
\end{remark}

\begin{remark}
    The fact that $\gap_\Lo$ is unbounded follows from the well-known fact $\frac{\abs{\Lo \cap [0,n]}}{n} \sim \frac{C}{\sqrt{\log n}}$ for a constant $C$ as $n \to \infty$, i.e., the asymptotic density of $\Lo$ is like $C/\sqrt{\log n}$, which in turn is a special case of a classical result of Bernays in his 1912 dissertation. This is the sense in which the average gap between L\"oschian numbers below $n$ is proportional to $\sqrt{\log n}$. \textit{Nota bene}, the average gap size below $n$ is not to be confused with the average over all $m \le n$ of the gap containing $m$, for the larger gaps of course contain more of these $m$ than each ``typical'' gap. This distinction amounts to the average of the side lengths of the squares in the diagram in Figure~\ref{fig:Loschian_gaps} versus their average area. 
\end{remark}

The structure of the proof is as follows. As in the theorem statement, let $Y_n$ denote a minimizer of $\E$ having exactly $n$ points. 
\begin{enumerate}
    \item First, we provide an asymptotic lower bound on $\E(Y_n)$ whose leading order term involves regular hexagons and whose next highest order term is related to the number of defects. The key tools here are the properties of $\mu(a,k)$, detailed in \S\ref{sec:polygon_moment}.
    \item We then give an asymptotic upper bound for $\E(Y_n)$ along a certain \emph{subsequence} (related to $\Lo$) which coincides with the previously obtained lower bound to first order, thus squeezing the second order term.
    \item Finally, using the fact that $\E(Y_n)$ is non-increasing in $n$, we apply the interpolation lemma to ``fill in the gaps'' between values for which we already have an upper bound.
\end{enumerate}

\begin{lemma}[Lower bound]
    There is a constant $\alpha>0$ such that for every $\lambda < 1$, if $n$ is sufficiently large, 
    \[
        \E(Y_n) \ge \frac{1}{n^{\frac r2}} \m(\sigma(\T), 6) + \frac{\alpha}{n^{\frac r2+1}}\Big( n\s^2(Y_n) + \lambda \df(Y_n) \Big)
    \]
    where $\m$ is as in \S\ref{sec:polygon_moment} and $\s^2(Y_n) = \frac1n\sum_y \abs{\sigma(D_y) - \sigma(\T)/n}^2$ with $D_y$ the Voronoi cell of $y$ and $\df(Y_n)$ is the number of defects. (The notation $\s^2$ is intentionally suggestive of population variance, although the data $\sigma(D_y)$ are of course not random.)
    \label{lemma:hex_lower_bound}
\end{lemma}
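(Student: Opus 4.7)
The plan is to lower-bound $\E(Y_n)$ cell-by-cell via the Fejes T\'oth moment lemma, then to quantitatively exploit the convexity of the polygon moment $\m$ near the ``ideal'' point $(\bar a, 6)$, where $\bar a = \sigma(\T)/n$. Writing $a_y = \sigma(D_y)$ and $k_y$ for the number of edges of the (convex, polygonal) cell $D_y$, property (2) in \S\ref{sec:polygon_moment} gives $\E(Y_n) \ge \sum_y \m(a_y, k_y)$. The explicit formula in \S\ref{sec:polygon_moment} factors as $\m(a,k) = a^{r/2+1}$ times a function of $k$ alone, from which $n \m(\bar a, 6) = n^{-r/2}\m(\sigma(\T), 6)$, matching the leading-order term of the claim. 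It therefore suffices to show $\sum_y \m(a_y, k_y) - n\m(\bar a, 6) \ge \alpha n^{-r/2-1}\bigl(n\s^2(Y_n) + \lambda\df(Y_n)\bigr)$.

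For this, I would Taylor-expand $\m$ about $(\bar a, 6)$ with Lagrange remainder and sum over $y$. The linear-in-$a$ contribution vanishes because $\sum_y a_y = \sigma(\T) = n\bar a$. The linear-in-$k$ contribution has the favourable sign and can simply be discarded: by Lemma~\ref{lemma:hexagon_average} on $\T$ (whose Euler characteristic is $0$), $\sum_y(k_y - 6) = n(\bar k - 6) \le 0$, while $\m_k(\bar a, 6) < 0$ since $\m$ is decreasing in $k$. For the quadratic remainder $\tfrac12 v_y^\top H(\xi_y) v_y$, with $v_y = (a_y - \bar a, k_y - 6)$ and $\xi_y$ on the segment from $(\bar a, 6)$ to $(a_y, k_y)$, I would use Corollary~\ref{cor:Delone} to pin down $a_y = \Theta(1/n)$ and $k_y \in [3, K]$ uniformly, so that for $n$ large every $\xi_y$ lies in the rectangle $(0,\varepsilon_K]\times[3,K]$ on which Lemma~\ref{lemma:eigenvalue_decay} gives a \emph{uniform} lower bound $\lambda_0(\xi_y) \ge C\bar a^{r/2+1}$ on the least eigenvalue. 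Summing the resulting estimate $v_y^\top H(\xi_y) v_y \ge C\bar a^{r/2+1}\bigl((a_y-\bar a)^2 + (k_y - 6)^2\bigr)$, using $\sum_y(a_y - \bar a)^2 = n\s^2(Y_n)$ together with $(k_y - 6)^2 \ge 1$ at each defect, produces the desired estimate; the factor $\lambda < 1$ accommodates lower-order slack inherent in evaluating the Hessian at $\xi_y$ rather than exactly at $(\bar a, 6)$.

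The main obstacle is exerting uniform control of the Hessian along the \emph{entire} Taylor segment, not merely at the basepoint, and this is precisely what the uniformity in $k$ over the compact interval $[3, K]$ in Lemma~\ref{lemma:eigenvalue_decay} is designed to provide. A conceptually important secondary point is that the sign of the linear-in-$k$ contribution is non-negative only because $\chi(\T) = 0$; on a surface with positive Euler characteristic the analogous sum $\sum_y(k_y - 6)$ becomes \emph{positive}, so that term would have to be absorbed into the quadratic remainder, foreshadowing the additional difficulties on the 2-sphere discussed in Section~\ref{sec:sphere}.
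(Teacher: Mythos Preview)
Your proof is correct and follows the paper's strategy: apply the moment lemma cell-by-cell, Taylor-expand $\m$ to second order with Lagrange remainder, and control the Hessian uniformly via Corollary~\ref{cor:Delone} and Lemma~\ref{lemma:eigenvalue_decay}. The one substantive difference is the basepoint: the paper expands about the centroid $(\bar a,\bar k)$, so both linear terms vanish identically, then uses monotonicity of $\m$ in $k$ together with $\bar k\le 6$ to pass from $\m(\bar a,\bar k)$ to $\m(\bar a,6)$, and finally invokes the convergence $\bar k\to 6$ from Lemma~\ref{lemma:hexagon_average} to obtain $(k_y-\bar k)^2\ge(1-\theta)^2$ at each defect---this is where the paper's $\lambda=(1-\theta)^2$ enters. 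Your choice $(\bar a,6)$ is slightly cleaner: you get $(k_y-6)^2\ge 1$ for free and only need the inequality $\bar k\le 6$, not the convergence.

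Two minor corrections. First, your stated reason for $\lambda<1$ does not match your own argument: in your setup the Hessian slack is absorbed entirely into the constant $\alpha$, and $(k_y-6)^2\ge 1$ holds exactly, so your proof actually delivers $\lambda=1$. Second, your closing remark about positive Euler characteristic is backwards: Lemma~\ref{lemma:hexagon_average} gives $\sum_y(k_y-6)=n(\bar k-6)\le -6\chi$, so on the sphere this sum is $\le -12$, i.e.\ \emph{more} negative, and the linear-in-$k$ term remains favorable. The genuine difficulty on the sphere lies elsewhere (the upper-bound construction), as Section~\ref{sec:sphere} explains.
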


Before we give the proof, this lemma warrants some commentary.

\begin{remark}
    The second term on the right of the above inequality is not self-evidently of lower order than the first term but this is indeed so as $\E(Y_n) = n^{-r/2}\mu(\sigma(\T),6) \pm o(n^{-r/2})$ (see \cite{Gruber2001}). The inequality is written in a way that will be convenient later when comparing with the upper bound.
\end{remark}

\begin{remark}
    This lower bound, as stated, is particular to the flat case where we can apply the moment lemma directly. However, the same lower bound holds to within $\pm O(n^{-\frac r2-1})$ in many other cases. For example, on the 2-sphere, one can employ gnomonic projections to reduce to the Euclidean setting while controlling the error. Constant negative curvature surfaces admit analogous projections. In the non-constant curvature setting, one cannot hope for such projections which take Voronoi cells to Euclidean polygons, but one still has natural projections where approximate polygons will do.
\end{remark}

\begin{remark}
    Note that while this provides a lower bound on minimum energy, i.e., the energy of the optimal point configuration $Y_n$, it does not necessarily hold for arbitrary point configurations because the bound itself depends on the point configuration. Indeed, one can easily construct severe examples $X_n$ having, say, only quadrilateral cells, so that $\df(X_n) = n$. In this case, the analogous lower bound is merely $\E(X_n) = \Omega(n^{-r/2})$.
\end{remark}

\begin{proof}
    For each $y \in Y_n$, let $D_y$ be the Voronoi cell of $y$ with respect to $Y_n$. Recall, $D_y$ is always a convex polygon. Let $a_y = \sigma(D_y)$ be the area of $D_y$ and let $k_y$ denote the number of edges of $D_y$. Let $\bar a = \frac1n\sum_y a_y$ and $\bar k = \frac1n \sum_y k_y$ denote their averages. By Corollary~\ref{cor:Delone}, the diameter of $D_y$ is $O(1/\sqrt n)$. In particular, the periodic structure of the torus has no bearing on distances between points within any one cell and so $D_y$ can be regarded as a Euclidean planar region by considering a chart that contains it. In local coordinates covering any one cell, we may replace $\varrho_\T(x,y)$ by $\norm{x-y}$ (the usual Euclidean 2-norm) and the surface measure $d\sigma(x)$ by the Lebesgue measure $dx$ without issue, and we shall do so without further comment. 

    By the moment lemma,
    \[
        \E(Y_n) 
        = \sum_{y\in Y_n} \int_{D_y} \norm{x-y}^r\,dx
        \ge \sum_y \m(a_y, k_y).
    \]
    Taking the first order Taylor expansion of each term about $(\bar a, \bar k)$ with the Lagrange form of the remainder we obtain
    \begin{equation}
        \sum_y \m(a_y, k_y)
        \ge n\m(\bar a, \bar k) + \frac12\inf_{(a,k)\in R_n}\lambda_0(a,k) \sum_y \norm{(a_y,k_y) - (\bar a, \bar k)}^2
        \label{eq:hex_conv_bd}
    \end{equation}
    where $\lambda_0(a,k)$ is the least eigenvalue of the Hessian of $\m$ at $(a,k)$ and $R_n \subset \R^2$ is smallest rectangle that contains all the $(a_y,k_y)$.

    Let's address the first term on the right of inequality \eqref{eq:hex_conv_bd}. Since the $D_y$ partition $\T$, we have $\bar a = \frac1n\sum_y a_y = \sigma(\T)/n$. Since the torus has Euler characteristic $0$, Lemma~\ref{lemma:hexagon_average} yields $\bar k \le 6$. Since $\m(a,k)$ is decreasing in $k$ for fixed $a$,
    \begin{equation}
        n\m(\bar a,\bar k)
        = n\m\Big(\frac{\sigma(\T)}{n},\bar k\Big)
        = n^{-r/2} \m(\sigma(\T), \bar k)
        \ge n^{-r/2} \m(\sigma(\T), 6).
        \label{eq:hex_first_order_lower_bd}
    \end{equation}

    We now turn our attention to the second term of \eqref{eq:hex_conv_bd}. Fix $\theta \in (0,1)$ as small as desired. By Lemma~\ref{lemma:hexagon_average}, we may assume $n$ is large enough that $\abs{\bar k - 6} \le \theta$ whence
    \begin{align}
        \label{eq:hex_2nd_order_sum_bd}
        \sum_y \norm{(a_y,k_y) - (\bar a, \bar k)}^2
        &= \sum_y \abs{a_y - \bar a}^2 + \sum_y \abs{k_y-\bar k}^2
        \\\nonumber&\ge \sum_y \abs{a_y - \bar a}^2 + \sum_{y\in Y_n,\, k_y \ne 6}\abs{k_y-\bar k}^2
        \\\nonumber&\ge \sum_y \abs{a_y - \bar a}^2 + \sum_{y\in Y_n,\, k_y \ne 6}(\abs{k_y-6} - \abs{6-\bar k})^2
        \\\nonumber&\ge n\s^2(Y_n) + (1-\theta)^2\df(Y_n)
    \end{align}
    where $\df(Y_n)$ is the number of defects (non-hexagonal cells).

    Lastly, let's consider the eigenvalue. Taking constants $A_\pm$ and $K$ as in Corollary~\ref{cor:Delone} so that $\frac{A_-}{n} \le a_y \le \frac{A_+}{n}$ and $k_y \le K$ for all $y \in Y_n$, we obtain $R_n \subseteq [\frac{A_-}{n}, \frac{A_+}{n}] \times [3,K]$. Then, taking $\varepsilon_K,C_K > 0$ as in Lemma~\ref{lemma:eigenvalue_decay}, we may assume $n$ is large enough that $A_+/n < \varepsilon_K$ and so
    \begin{equation}
        \inf_{R_n} \lambda_0
        \ge \inf_{[\frac{A_-}{n},\frac{A_+}{n}]\times[3,K]} \lambda_0
        \ge \inf_{[\frac{A_-}{n},\frac{A_+}{n}]\times[3,K]} C_Ka^{\frac r2+1}
        \ge \frac{C_KA_-^{\frac r2+1}}{n^{\frac r2+1}}.
        \label{eq:eigenvalue_lower_bd}
    \end{equation}

    Substituting inequalities \eqref{eq:hex_first_order_lower_bd}, \eqref{eq:hex_2nd_order_sum_bd}, \eqref{eq:eigenvalue_lower_bd} into \eqref{eq:hex_conv_bd} yields the result (with constants $\alpha = C_KA_-^{r/2+1}$ and $\lambda = (1-\theta)^2$).
\end{proof}

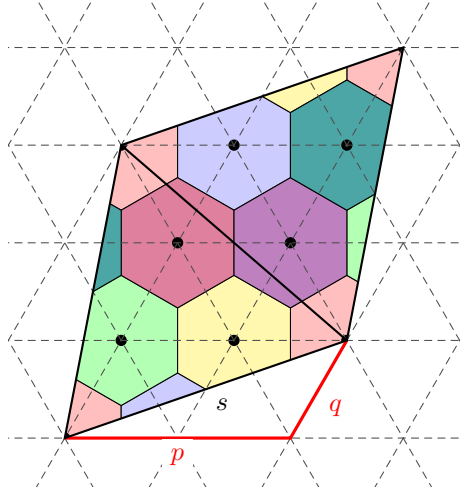
\begin{figure}[ht]
    \centering
    \begin{tikzpicture}[scale=1.5]
        \pgfmathsetmacro\r{sqrt(3)}

        \coordinate (O) at ($(1,0) + (60:1)$);

        \clip (1,\r/4) rectangle (5,5*\r/2 + \r/4);

        \begin{scope}
            \clip (O) -- ++($(0:2) + (60:1)$) -- ++($(60:2)+(120:1)$) -- ++($(180:2)+(240:1)$) -- cycle;
            \foreach \x/\y/\hue in {
                0/0/pink, .5/3/pink, 2.5/1/pink, 3/4/pink, 
                1/0/blue!20, 1.5/3/blue!20,
                .5/1/green!30, 3/2/green!30,
                1.5/1/yellow!40, 2/4/yellow!40,
                1/2/purple!50, 
                2/2/violet!50,
                2.5/3/teal!70, 0/2/teal!70
            } {
                \draw[fill=\hue] ($(O) + (\x,\y*\r/2) + (1/2, \r/6)$) foreach \i in {0,...,4} {-- ++(150 + 60*\i:\r/3)} -- cycle;
                \fill[black] ($(O) + (\x,\y*\r/2)$) circle  (.05);
            }
        \end{scope}

        \begin{scope}[densely dashed, black!70]
            \foreach \x in {-1,0,...,6} {
                \draw (\x,0) -- +(60:6);
                \draw (\x+1,0) -- +(120:6);
                \draw (0,\x*\r/2) -- +(7,0);
            }
        \end{scope}

        \draw[very thick, red] (O) -- node[midway, below, fill=white] {$p$} ++(0:2) -- node[midway, below right, fill=white] {$q$} ++(60:1);

        \draw[thick] (O) -- node[midway, below right] {$s$} ++($(0:2) + (60:1)$) -- ++($(60:2)+(120:1)$) -- ++($(180:2)+(240:1)$) -- cycle;
        \draw[thick] ($(O) + (0:2) + (60:1)$) -- +($(120:2) + (180:1)$);

    \end{tikzpicture}
    \caption{The Goldberg-Coxeter construction for $(p,q) = (2,1)$ yields the point configuration $Y_{2,1} \subset \T$ with $n = \abs{Y_{2,1}} = 7$ whose Voronoi tessellation is the well-known tiling of $\T$ by seven congruent regular hexagons. (In the proof, we scale by $1/s$ so that the rhombus is congruent for all $p,q$.) The area of each of the two large triangles (with solid edges) is clearly $s^2$ times that of each small dashed triangle, hence the number of hexagons is $2s^2 \cdot 3 / 6 = s^2 = p^2 + pq + q^2$ (the total number of small triangles within the rhombus times the number of vertices per triangle divided by six since each hexagon shares the vertices of six triangles).} 
    \label{fig:GB_example}
\end{figure}

\begin{lemma}[Upper bound via the Goldberg-Coxeter construction]
    For each $p,q \in \N$ (not both $0$), there is a point configuration $Y_{p,q} \subset \T$ with
    \[
        \E(Y_{p,q}) = \frac1{n^{r/2}} \m(\sigma(\T), 6).
    \]
    where $n = \abs{Y_{p,q}} = p^2 + pq + q^2$. Hence $\E(Y_n) \le \E(Y_{p,q})$ for all such $n$ where $Y_n$ is the minimizer.
    \label{lemma:hex_upper_bound}
\end{lemma}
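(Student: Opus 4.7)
The plan is to realize $Y_{p,q}$ as the image in $\T$ of a refined hexagonal sublattice $\Lambda^{*} \supset \Lambda$ that is itself a hexagonal lattice. Identifying $\R^2$ with $\mathbb C$, write $\T = \mathbb C/\Lambda$ with $\Lambda = \Z \oplus \omega\Z$ the ring of Eisenstein integers, where $\omega = \e^{i\pi/3}$ is a primitive sixth root of unity. For $(p,q) \neq (0,0)$, set $\alpha \coloneqq p + q\omega \in \Lambda$. The crux is the algebraic identity $\alpha\bar\alpha = p^2 + pq + q^2 = n$: the set $\Lambda^{*} \coloneqq \tfrac{1}{\alpha}\Lambda$ is a hexagonal lattice (obtained from $\Lambda$ by the similarity $z \mapsto z/\alpha$) containing $\Lambda$ with index $[\Lambda^{*} : \Lambda] = \abs\alpha^2 = n$. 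I define $Y_{p,q} \coloneqq \pi(\Lambda^{*}) \subset \T$, where $\pi : \mathbb C \to \T$ is the canonical projection; then $\abs{Y_{p,q}} = n$ is automatic. This is precisely the Goldberg-Coxeter construction of Figure~\ref{fig:GB_example}, rephrased lattice-theoretically.

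For the energy, let $V^{*} \subset \R^2$ be the planar Voronoi cell of the origin with respect to $\Lambda^{*}$; since $\Lambda^{*}$ is hexagonal, $V^{*}$ is a regular hexagon of area equal to that of a fundamental domain of $\Lambda^{*}$, namely $\sigma(\T)/n$. Because $\Lambda \subset \Lambda^{*}$, the function $\tilde x \mapsto \min_{\lambda \in \Lambda^{*}}\norm{\tilde x - \lambda}^r$ is $\Lambda$-periodic on $\mathbb C$, and torus distances are realized by minimizing over preimages, so
\[
    \E(Y_{p,q}) = \int_F \min_{\lambda \in \Lambda^{*}}\norm{\tilde x - \lambda}^r\,d\tilde x
\]
for any fundamental domain $F$ of $\Lambda$. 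Choose $F$ to be the disjoint union (up to measure zero) of $n$ translates of $V^{*}$ by coset representatives of $\Lambda^{*}/\Lambda$; this is available since $[\Lambda^{*}:\Lambda] = n$. On each such translate the integrand is precisely the distance (to the $r$-th power) from the translated center, so the integrals are all equal to $\m(\sigma(\T)/n, 6)$, giving $\E(Y_{p,q}) = n\,\m(\sigma(\T)/n, 6)$.

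Finally, the explicit formula for the polygon moment in \S\ref{sec:polygon_moment} displays $\m(\cdot, k)$ as $(r/2+1)$-homogeneous in its area argument, whence
\[
    n\,\m(\sigma(\T)/n, 6) = n \cdot n^{-r/2-1}\m(\sigma(\T), 6) = n^{-r/2}\m(\sigma(\T), 6),
\]
which is the identity claimed in the lemma; the inequality $\E(Y_n) \le \E(Y_{p,q})$ then follows by optimality of $Y_n$. I do not anticipate a real obstacle; the content is essentially the Eisenstein norm identity together with the scaling property of $\m$. The only mild subtlety—which motivates the lift-and-integrate approach above—is that for small $n$ the hexagon $V^{*}$ need not embed isometrically into $\T$, but working on the universal cover sidesteps this concern entirely.
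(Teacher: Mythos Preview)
Your argument is correct and, in fact, somewhat cleaner than the paper's. The paper realizes $Y_{p,q}$ geometrically: it decomposes $\T$ into two unit equilateral triangles, overlays each with a copy of the scaled triangular lattice $\tfrac1s\Lambda$ (where $s^2 = p^2+pq+q^2$) via orientation-preserving isometries, and then argues combinatorially---counting small triangles inside the two big ones---that the resulting configuration has $s^2$ points and that its Voronoi cells are congruent regular hexagons. You instead exploit the ring structure of the Eisenstein integers: setting $\alpha=p+q\omega$ and $\Lambda^{*}=\alpha^{-1}\Lambda$, the identity $\alpha\bar\alpha=n$ gives $[\Lambda^{*}:\Lambda]=n$ for free, and lifting the energy integral to the plane and tiling a $\Lambda$-fundamental domain by $n$ translates of the $\Lambda^{*}$-Voronoi hexagon computes $\E(Y_{p,q})$ directly, with the homogeneity of $\m$ finishing the job. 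Your route makes the L\"oschian norms appear naturally and sidesteps any discussion of how the torus Voronoi cells look (your remark about small $n$ is apt); the paper's triangle-gluing description, on the other hand, is the form of the Goldberg--Coxeter construction that transports to other equi-triangulated surfaces---in particular the icosahedron, which is the point of Section~\ref{sec:sphere}---where no global lattice structure is available.
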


\begin{proof}
    The following construction is illustrated for $(p,q) = (2,1)$ in Figure~\ref{fig:GB_example}. Let $\Lambda \subset \R^2$ be the lattice spanned by $\vec u = (1,0)$ and $\vec v = (\frac12,\frac{\sqrt3}{2})$. For $p,q \in \N$ not both zero, let $s^2 = p^2 + pq + q^2$ and let $\Delta_{p,q}$ be the closed equilateral triangular region (of side length 1) in the plane having one vertex at the origin and the other at $\frac{1}{s}(p\vec u + q\vec v) \in \frac1s\Lambda$. (Of course, there are two such triangles but it does not matter which we take.) Divide $\T$ into two equilateral triangles, say $T_1$ and $T_2$, of side length 1 in the obvious way and assign them consistent orientations (since $\T$ itself is orientable). Identify $T_i$ with $\Delta_{p,q}$, call the identification $f_{T_i} : \Delta_{p,q} \to T_i$, such that $f_i$ preserves orientation. In other words, the identifications $f_{T_i}$ are such that the orientations of the images are consistent. Finally, set $Y_{p,q} = \bigcup_i f_{T_i}(\frac{1}{s}\Lambda \cap \Delta_{p,q})$. 

    Since $\frac1s\Lambda$ is invariant under the rotations of $\Delta_{p,q}$, it follows that $Y_{p,q}$ does not depend on which particular isometries $f_{T_i}$ are used in the construction, since they are related to each other by a rotation of $\Delta_{p,q}$. (Notice also that the reflection of $Y_{p,q}$ is none other than $Y_{q,p}$ so it makes no difference which orientation we pick for $\T$ or which of the two possible triangles in the plane we took $\Delta_{p,q}$ to be, the same set of patterns $Y_{p,q}$ are witnessed as we vary $p,q \in \N$.)

    The Voronoi tessellation of $Y_{p,q}$ consists of $n = \abs{Y_{p,q}}$ congruent regular hexagons. (One way to see that they are all congruent regular hexagons is to note the translation invariance of the construction. One can cut and glue the rhombus representing the torus so as to view any desired point of $Y_{p,q}$ as away from the ``boundary''.) It remains to show that $n = p^2 + pq + q^2$. Consider the triangulation dual to the Voronoi tessellation. This is simply the image of the triangulation of the plane with vertex set $\frac1s\Lambda$ by small equilateral triangles of side length $1/s$. Since $\Delta_{p,q}$ is an equilateral triangle of side length $1$, it contains exactly $s^2$ small triangles in total (whether or not some are only partly inside $\Delta_{p,q}$). So the image in $\T = T_1 \cup T_2$ contains exactly $2s^2$ small triangles whose vertices form $Y_{p,q}$. Each such small triangle has exactly three vertices and each vertex is shared by exactly six small triangles so $\abs{Y_{p,q}} = \frac16 \cdot 3 \cdot 2s^2 = s^2 = p^2 + pq + q^2$. 
\end{proof}

We can now prove Theorem~\ref{thm:hex_torus}.

\begin{proof}[Proof of Theorem~\ref{thm:hex_torus}]
    Let $\Lo = \{p^2+pq+q^2 \mid p,q\in \N\}$ be the set of L\"oschian numbers, as in the theorem statement, and let $S = \{p^2 \mid p\in \N\}$ be the set of squares. Obviously, $S \subseteq \Lo$, so $\gap_\Lo \le \gap_S$ (as functions on $\N$). Thus,
    \[
        \gap_\Lo(n)
        \le \gap_S(n) 
        = (\lfloor\sqrt n\rfloor + 1)^2 - \lfloor\sqrt n\rfloor^2
        = 2\lfloor\sqrt n\rfloor + 1
        = o(n)
    \]
    so $\Lo$ satisfies the small gap condition. Therefore, applying Lemma~\ref{lemma:error_interp} with $F(n) = \frac{n^{\frac r2-1}}{\m(\sigma(\T),6)}\E(Y_n)$ and $e = 0$ to interpolate the partial upper bound obtained in Lemma~\ref{lemma:hex_upper_bound} yields
    \[
        \E(Y_n) \le \frac1{n^{r/2}} \m(\sigma(\T),6) + O\Big(\frac1{n^{\frac r2+1}}\gap_\Lo(n)\Big).
    \]
    In light of the lower bound from Lemma~\ref{lemma:hex_lower_bound} taking $\lambda = \frac12$ say, it must be that $n\s^2(Y_n) + \frac12\df(Y_n) = O(\gap_\Lo(n))$. Both these terms are non-negative, so the bound holds for each term individually, i.e., $\df(Y_n) = O(\gap_\Lo(n))$ and $\s^2(Y_n) = O(n^{-1}\gap_\Lo(n))$. 
\end{proof}

An important point here is the crucial role played by the \emph{binary} quadratic form $p^2 + pq + q^2$. In particular, the authors first considered only the more obvious tessellations $Y_{p,0}$ arising in the $q = 0$ case. These tessellations are ``aligned'', so to speak, with the torus itself and it is quite transparent that one can partition $\T$ into $p^2$ many congruent regular hexagons (up to a constant factor). The problem is that the values $n = p^2$ are too sparse in $\N$, they thin out too rapidly. Namely, $\gap_{\{p^2 \mid p \in \N\}}(n) = \Theta(\sqrt n)$ and the interpolation lemma only gives $\E(Y_n) \le \frac1{n^{r/2}}\m(\sigma(\T),6) + O(n^{-\frac{r+1}{2}})$ from which we can conclude only that $\df(Y_n) = O(\sqrt n)$, a non-trivial result, but not a strong enough one to answer our main question. Recall, the whole point is to demonstrate that surfaces without boundary can achieve \emph{strictly fewer} than $O(\sqrt n)$ many defects.

Generalizing the above reasoning, the key observation is that the Goldberg-Coxeter construction can be carried out on any orientable surface without boundary that can be obtained by gluing congruent equilateral triangles together along their edges. Whether or not the lower bound holds on every such surface (to within a tolerable error) is a delicate matter in general. However, our arguments work just as well in the special case of flat tori $\T_\gamma = S^1 \times \gamma\frac{\sqrt 3}{2}S^1$ where $\gamma$ is a positive rational number. The lower bound argument we gave for the hexagonal torus $\T = \T_1$ applies verbatim to $\T_\gamma$. The upper bound is essentially the same except that we now have $2ab$ big triangles that make up $\T_\gamma$ where $a,b$ are integers with $\gamma =a/b$. Following the same reasoning as before, the upper bound is obtained for all $n \in ab\Lo$. Thus, the analogue of Theorem~\ref{thm:hex_torus} holds with $\gap_\Lo(n)$ replaced by $\gap_{ab\Lo}(n)$. For any unbounded $S \subseteq \N$ and constant $k > 0$, we can express $\gap_{kS}$ in terms of $\gap_S$ as $\gap_{kS}(n) = k\cdot \gap_S(n/k)$. It follows that the $O(n^{-1/4})$ corollary holds for every $\T_\gamma$ and likewise, if $\gap_\Lo(n) = O(\log n)$, then so is the number of defects in the optimizer of $\T_\gamma$. 

\section{Stability}
\label{sec:stability}

Here, we demonstrate a certain fairly weak stability result. We do not mount a detailed investigation, but merely reinforce the main result Theorem~\ref{thm:hex_torus} by showing that it holds in some neighborhood of the minimizer $Y_n$. The worry is that $\df(Y_n)$, being integer-valued, is not continuous with respect to perturbations of $Y_n$, and so might conceivably skyrocket even for arbitrarily small perturbations. In other words, it might happen that $O(\gap_\Lo(n))$ many defects is only attained on some pathological (e.g., measure zero) set which happens to include the minimizers $Y_n$. The purpose of this section is to rule out this possibility by showing that all configurations $\tilde Y_n$ in a sufficiently small open neighborhood of $Y_n$ also attain $O(\gap_\Lo(n))$ many defects.

For each $n$, let $\delta_n : Y_n \to \M$ and write $\abs{\delta_n} = \max_{y \in Y_n} \varrho_\M(y,\delta_n(y))$. Think $\delta_nY_n$ as a perturbation of $Y_n$ by no more than $\abs{\delta_n}$. 

\begin{theorem}
    Let $Y_n \subset \T$ with $\abs{Y_n} = n$ be a minimizer of $\E$ for each $n$ and let $\delta_n : Y_n \to \T$ be perturbations as above. The following hold.
    \begin{enumerate}
        \item For any closed manifold $\M$ and for every $r > 0$, if $\abs{\delta_n} = o(n^{-1/2})$, then $\delta_nY_n$ satisfy the $n^{-1/2}$-Delone properties and $\E(\delta_nY_n) \sim \E(Y_n)$.
        \item In the special case of the hexagonal torus $\M = \T$, If $r \ge 1$ and $\abs{\delta_n} = O(n^{-3/2})$ or $r < 1$ and $\abs{\delta_n} = O(n^{-\frac12-\frac1r})$, then the number of defects in $\delta_nY_n$ is $O(\gap_\Lo(n))$.
    \end{enumerate}
    \label{thm:stability}
\end{theorem}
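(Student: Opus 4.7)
The plan is to leverage the lower and upper bounds developed in Section~\ref{sec:torus}, both of which apply to any $n$-point configuration enjoying Gruber's asymptotic regularity property~(\ref{item:asym_reg_hex}), which in turn follows (by Gruber's Theorem~\ref{thm:Gruber}) from the single condition $\E(\tilde Y_n) \sim \E(Y_n)$. Part~(1) of the theorem is precisely this equivalence for $\tilde Y_n = \delta_n Y_n$, so part~(2) will reduce to retracing the proof of Theorem~\ref{thm:hex_torus} with the energy of $\delta_n Y_n$ in place of $\E(Y_n)$, showing the perturbation error is absorbed by the $\gap_\Lo(n)$ term.

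For part~(1), the Delone properties follow from the triangle inequality: if $\varrho_\M(y,y') \ge C/\sqrt n$ for distinct $y,y' \in Y_n$, then $\varrho_\M(\delta_n y, \delta_n y') \ge C/\sqrt n - 2\abs{\delta_n} = (1-o(1))C/\sqrt n$, and the covering argument is analogous. For the energy, I would use the pointwise estimate $\abs{\min_{y'\in\delta_n Y_n}\varrho_\M(x,y') - \min_{y\in Y_n}\varrho_\M(x,y)}\le\abs{\delta_n}$ together with the elementary inequalities $\abs{a^r - b^r}\le r\max(a,b)^{r-1}\abs{a-b}$ for $r\ge1$ and $\abs{a^r-b^r}\le\abs{a-b}^r$ for $r<1$. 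Since the covering radius estimate forces the relevant distances $\min_y\varrho_\M(x,y)$ to be $O(n^{-1/2})$ everywhere on $\M$, integrating over the compact $\M$ yields
\[
    \abs{\E(\delta_n Y_n) - \E(Y_n)} =
    \begin{cases}
        O\!\left(n^{-(r-1)/2}\abs{\delta_n}\right) & r\ge 1,\\
        O\!\left(\abs{\delta_n}^r\right) & r<1.
    \end{cases}
\]
Under the hypothesis $\abs{\delta_n} = o(n^{-1/2})$, both cases give $o(n^{-r/2}) = o(\E(Y_n))$, establishing $\E(\delta_n Y_n) \sim \E(Y_n)$.

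For part~(2), note that the sharper hypotheses $\abs{\delta_n}=O(n^{-3/2})$ (when $r\ge1$) and $\abs{\delta_n}=O(n^{-1/2-1/r})$ (when $r<1$) both imply $\abs{\delta_n}=o(n^{-1/2})$, so part~(1) applies and Gruber's asymptotic regularity holds for $\delta_n Y_n$. Consequently Corollary~\ref{cor:Delone} and Lemma~\ref{lemma:hexagon_average} are available for $\delta_n Y_n$, and the proof of Lemma~\ref{lemma:hex_lower_bound} goes through verbatim with $Y_n$ replaced by $\delta_n Y_n$:
\[
    \E(\delta_n Y_n) \ge \frac{1}{n^{r/2}}\m(\sigma(\T),6) + \frac{\alpha}{n^{r/2+1}}\bigl(n\s^2(\delta_n Y_n) + \lambda\df(\delta_n Y_n)\bigr).
\]
Meanwhile, the proof of Theorem~\ref{thm:hex_torus} gives $\E(Y_n) \le n^{-r/2}\m(\sigma(\T),6) + O(n^{-r/2-1}\gap_\Lo(n))$, and rerunning the part~(1) error calculation under the tighter hypotheses makes the perturbation error exactly $O(n^{-r/2-1})$ in both regimes (this is precisely why the threshold bifurcates at $r=1$, and why the exponents in the hypothesis have their specific values). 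Since $\gap_\Lo(n)\ge 1$, this error is absorbed, yielding $\E(\delta_n Y_n) \le n^{-r/2}\m(\sigma(\T),6) + O(n^{-r/2-1}\gap_\Lo(n))$. Combined with the lower bound, $\df(\delta_n Y_n) = O(\gap_\Lo(n))$ as required.

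The main obstacle I anticipate is verifying that the proof of Lemma~\ref{lemma:hex_lower_bound}---written for the minimizer $Y_n$---really does transfer cleanly to $\delta_n Y_n$ without any hidden use of optimality beyond the Delone and asymptotic regularity hypotheses that part~(1) provides. A careful audit of that proof (the moment lemma, Taylor expansion, eigenvalue bound, and average-degree bound) should confirm that optimality itself is never invoked, only these geometric consequences of Gruber's theorem.
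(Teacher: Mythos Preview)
Your proposal is correct and follows the same overall architecture as the paper: establish the Delone properties by triangle inequality, control $\abs{\E(\delta_nY_n)-\E(Y_n)}$ pointwise, observe that the lower-bound Lemma~\ref{lemma:hex_lower_bound} only uses the Delone and asymptotic-regularity consequences of Gruber's theorem (never optimality \emph{per se}), and then squeeze against the interpolated upper bound with the perturbation error absorbed into the $O(n^{-r/2-1}\gap_\Lo(n))$ term.

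The one genuine difference worth flagging is your treatment of the $r<1$ case in part~(1). The paper does not use the subadditivity inequality $\abs{a^r-b^r}\le\abs{a-b}^r$; instead it excises small balls $E_y=B(y,\abs{\delta_n})\cup B(\delta_ny,\abs{\delta_n})$ around each generator pair, applies the mean value theorem on $D_y\setminus E_y$ (where $\xi_y^{r-1}\le\abs{\delta_n}^{r-1}$), and bounds the contribution on $D_y\cap E_y$ crudely by area, obtaining $\abs{\E(\delta_nY_n)-\E(Y_n)}=O(\abs{\delta_n}^r)+O(n^{-r/2+1}\abs{\delta_n}^2)$. Your route is strictly simpler: the subadditivity inequality gives $O(\abs{\delta_n}^r)$ in one line with no domain splitting, and under the hypothesis $\abs{\delta_n}=O(n^{-1/2-1/r})$ this is already $O(n^{-r/2-1})$, exactly what part~(2) needs. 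The paper's extra term is harmless (it is $O(n^{-r/2-2/r})$, dominated by $O(n^{-r/2-1})$ since $r<1$), so both arguments reach the same threshold, but yours is the cleaner one. Your audit of Lemma~\ref{lemma:hex_lower_bound} for hidden uses of optimality is exactly the right check to perform, and indeed the paper makes the same observation.
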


\begin{proof}
    We argue by showing that the argument for Theorem~\ref{thm:hex_torus} generalizes to the perturbed configurations.
    The $n^{-1/2}$-Delone properties are easy. Say $C,C'>0$ as in Theorem~\ref{thm:Gruber} witness the Delone properties for $Y_n$. Since $\abs{\delta_n} = o(n^{-1/2})$, assume $n$ is large enough that $\abs{\delta_n} \le \min\{C/3\sqrt n, C'/\sqrt n\}$. Then if $y,y' \in Y_n$ are distinct, 
    \[
        \varrho_\M(\delta_ny,\delta_ny') 
        \ge \varrho_\M(y,y') - \varrho_\M(y,\delta_ny) - \varrho_\M(y',\delta_ny') 
        \ge \frac{C}{\sqrt n} - 2\abs{\delta_n}
        = C/3\sqrt n.
    \]
    So the sequence $\delta_n Y_n$ is $\Omega(n^{-1/2})$ uniformly discrete. Take $x \in \M$, then 
    \[
        \min_{\tilde y\in \delta_n Y_n}\varrho_\M(x,\tilde y) 
        = \min_{y \in Y_n} \varrho_\M(x,\delta_n y) 
        \le \min_{y \in Y_n} \varrho_M(x,y) + \abs{\delta_n}
        \le 2C'/\sqrt n.
    \]
    Therefore the sequence $\delta_nY_n$ has the $O(n^{-1/2})$ covering radius property.

    That $\E(\delta_nY_n) \sim \E(Y_n)$ is more subtle. First consider the $r \ge 1$ case.
    \begin{align*}
        0
        &\le \E(\delta_nY_n) - \E(Y_n)
        \\&= \int_\M \big(\min_{y'} \varrho_\M(x,\delta_ny')^r - \min_y \varrho_\M(x,y)^r\big)\,d\sigma(x)
        \\&= \sum_y \int_{D_y} \big( \min_{y'} \varrho_\M(x,\delta_ny')^r - \varrho_\M(x,y)^r \big)\,d\sigma(x)
        \\&\le \sum_y \int_{D_y} \big( \varrho_\M(x,\delta_ny)^r - \varrho_\M(x,y)^r \big)\,d\sigma(x)
        \\&= \sum_y \int_{D_y} r\xi_y^{r-1}(\varrho_\M(x,\delta_ny) - \varrho_\M(x,y))\,d\sigma(x)
    \end{align*}
    where the minima and sums in the above expressions are taken over $y,y' \in Y_n$ and, employing the mean value theorem, each $\xi_y$ is a number strictly between $\varrho_\M(x,y)$ and $\varrho_\M(x,\delta_ny)$ (unless these happen to be the same value, in which case the whole term vanishes anyway).
    
    Since $x \in D_y$, we find $0 \le \xi_y \le \max\{\varrho_\M(x,y), \varrho_\M(x,\delta_ny)\} \le \varrho_\M(x,y)+\varrho_\M(x,\delta_ny) \le 2\varrho_\M(x,y) + \varrho_\M(y,\delta_ny) \le O(n^{-1/2}) + \abs{\delta_n} = O(n^{-1/2})$ (because $Y_n$ has the $O(n^{-1/2})$ covering radius property). Hence, if $r \ge 1$, then $\xi_y^{r-1} = O(n^{-\frac{r-1}{2}})$. The second factor in the integrand might have either sign but clearly $\abs{\varrho_\M(x,\delta_ny) - \varrho_\M(x,y)} \le \varrho_\M(y,\delta_ny) \le \abs{\delta_n}$ so
    \begin{multline}
        \abs{\E(\delta_nY_n) - \E(Y_n)}
        \le \sum_y \int_{D_y} O(n^{-\frac{r-1}{2}})\cdot \abs{\delta_n}\,d\sigma
        \\= O(n^{-\frac{r-1}{2}}) \cdot \sum_y \sigma(D_y)\cdot \abs{\delta_n}
        = O(n^{-\frac{r-1}{2}}) \sigma(\M) \abs{\delta_n}
        = O(n^{-\frac{r-1}{2}}) \abs{\delta_n}.
        \label{eq:stab_r>=1}
    \end{multline}

    Now consider $r < 1$. For each $n$ and $y \in Y_n$, let $E_y = B(y,\abs{\delta_n}) \cup B(\delta_n y,\abs{\delta_n})$. Then, excising each $E_y$, we obtain
    \begin{align*}
        0 &\le \E(\delta_nY_n) - \E(Y_n)
        \\&\le\sum_y \int_{D_y\setminus E_y} (\varrho_\M(x,\delta_ny)^r - \varrho_\M(x,y)^r)\,d\sigma(x) 
        \\&\qquad\qquad + \sum_y \int_{D_y\cap E_y} (\varrho_\M(x,\delta_ny)^r - \varrho_\M(x,y)^r)\,d\sigma(x)
        \\&\le \sum_y \int_{D_y\setminus E_y} r\xi_y^{r-1}(\varrho_\M(x,\delta_ny) - \varrho_\M(x,y))\,d\sigma(x) 
        \\&\qquad\qquad + \sum_y \int_{D_y\cap E_y} (\varrho_\M(x,\delta_ny)^r + \varrho_\M(x,y)^r)\,d\sigma(x)
    \end{align*}
    Since $x \in D_y$, we have $\varrho_\M(x,y) = O(n^{-1/2})$ and $\varrho_\M(x,\delta_ny) \le \varrho_\M(x,y) + \abs{\delta_n} = O(n^{-1/2})$, so the integrand in the second term is $O(n^{-r/2})$. Since $\M$ is compact, $\sigma(E_y) = O(\abs{\delta_n}^2)$. Thus, the whole second term of the above is $\sum_y O(n^{-r/2})O(\abs{\delta_n}^2) = O(n^{-r/2+1}\abs{\delta_n}^2)$. 

    For the first term, since $x \not\in E_y$, we have $\xi_y \ge \min\{\varrho_\M(x,y),\varrho_\M(x,\delta_ny)\} \ge \abs{\delta_n}$. Then, since $r < 1$, we have
    \begin{multline*}
        \sum_y\int_{D_y\setminus E_y} r\xi_y^{r-1}\abs{\varrho_\M(x,\delta_ny) - \varrho_\M(x,y)}\,d\sigma(x)
        \le \sum_y \int_{D_y\setminus E_y} r\abs{\delta_n}^{r-1}\abs{\delta_n}\,d\sigma(x)
        \\\le O(\abs{\delta_n}^r) \cdot \sum_n \sigma(D_y)
        = O(\abs{\delta_n}^r) \cdot \sigma(\M)
        = O(\abs{\delta_n}^r).
    \end{multline*}
    Altogether,
    \begin{equation}
        \abs{\E(\delta_nY_n) - \E(Y_n)}
        = O(\abs{\delta_n}^r) + O(n^{-\frac r2 + 1}\abs{\delta_n}^2)
        \label{eq:stab_r<1}
    \end{equation}
    for $r < 1$. 

    Inspecting \eqref{eq:stab_r>=1} and \eqref{eq:stab_r<1} we see that for all $r > 0$, if $\abs{\delta_n} = o(n^{-1/2})$, then $\abs{\E(\delta_nY_n) - \E(Y_n)} = o(n^{-r/2})$. Since $\E(n) = \Theta(n^{-r/2})$, this gives $\E(\delta_nY_n) \sim \E(Y_n)$ for all such perturbations $\delta_n$. This completes the first part of the theorem.

    Now consider specifically the hexagonal torus $\M = \T$. Notice that Corollary~\ref{cor:Delone} has nothing to do with the optima $Y_n$ in particular, rather it holds of any point configurations $\tilde Y_n$ with the $n^{-1/2}$-Delone property. Thus, it holds equally for the perturbations $\delta_nY_n$ with $\abs{\delta_n} = o(1/\sqrt n)$.
    Moreover, since $\E(\delta_nY_n) \sim \E(Y_n)$, it follows that $\delta_n Y_n$ has the asymptotically regular hexagonal pattern property of Theorem~\ref{thm:Gruber}. Together, this means Lemma~\ref{lemma:hexagon_average} holds for $\delta_nY_n$ and Lemma~\ref{lemma:hex_lower_bound} holds for all $\delta_nY_n$ with $\abs{\delta_n} = o(n^{-1/2})$. 

    For $r \ge 1$ and $\abs{\delta_n} = O(n^{-3/2})$, Equation~\eqref{eq:stab_r>=1}, Lemma~\ref{lemma:hex_upper_bound}, and Lemma~\ref{lemma:error_interp} yield
    \[
        \E(\delta_nY_n) \le \E(Y_n) + O(n^{-r/2-1})
        \le \frac1{n^{r/2}} \m(\sigma(\T),6) + O(n^{-r/2-1}\gap_\Lo(n)).
    \]
    Since Lemma~\ref{lemma:hex_lower_bound} holds for $\delta_nY_n$, we have $\df(\delta_nY_n) = O(\gap_\Lo(n))$, as we had for $Y_n$.

    For $r < 1$ and $\abs{\delta_n} = O(n^{-\frac12-\frac1r})$, Equation~\eqref{eq:stab_r<1} yields, $\E(\delta_nY_n) \le \E(Y_n) + O(n^{-\frac r2 - 1}) + O(n^{-\frac r2 - \frac2r}) = \E(Y_n) + O(n^{-\frac r2 - 1})$ and the conclusion follows as in the $r \ge 1$ case.
\end{proof}

\begin{remark}
    Let us mention that the argument just given is quite robust. In particular, it has nothing really to do with the L\"oschian gaps $\gap_\Lo(n)$ specifically, rather it bounds $\df(\delta_n Y_n)$ in terms of the second order contribution to $\E(Y_n)$, 
    \[
        \df(\delta_nY_n) \le O\Big( n\big( n^{r/2}\E(Y_n) - \mu(\sigma(\T),6)\big) \Big),
    \]
    and so any future improvement of the second order bound on $\E(Y_n)$ will carry over to the $\abs{\delta_n} = O(n^{-\frac12 - \max\{1,\frac1r\}})$ perturbations. 
\end{remark}

This result is somewhat unsatisfactory in that $O(n^{-3/2})$ (and $O(n^{-\frac12-\frac1r})$ in the $r < 1$ case) is much smaller than the $\Theta(n^{-1/2})$ distance between nearby points of $Y_n$, so we are here considering only very small perturbations. Nonetheless, the result suffices for the non-pathology claim we set out to show, if crudely.

We find it curious that not until this section has the $r < 1$ situation behaved any worse than the intuitively much nicer $r \ge 1$ case. Whether the $O(n^{-3/2})$ for $r \ge 1$ or the $O(n^{-\frac r2 - \frac1r})$ for $r < 1$ perturbation bounds can be improved or shown to be optimal is unclear. We will not endeavour to do so here but we wish to comment on some of the issues involved, at least in the $r \ge 1$ case. Maybe one has the thought that $\abs{\delta_n} = o(n^{-1/2})$ should suffice to maintain the asymptotic bound on the number of defects, since $n^{-1/2}$ is roughly the separation between nearby points of the optima $Y_n$. However, it seems consistent with what we know that, infinitely often, the minima $Y_n$, while having very few defects, might have relatively many \emph{near} defects. More precisely, say a Voronoi cell $D_y$ is an $\varepsilon$-near pentagon if it is a hexagon but exactly one of its sides has length $\le \varepsilon$. Sweeping many details under the rug, for any sequence $\varepsilon_n = o(n^{-1/2})$, one can use the previous theorem and some Voronoi tessellation witchcraft to bound the number of $\varepsilon_n$-near pentagons in $Y_n$. The basic idea is that there is a sequence of perturbations $\delta_n$ whose size $\abs{\delta_n}$ is related directly to $\varepsilon_n$ which, in effect, collapses the $\varepsilon_n$ short edge of a non-negligible proportion of the $\varepsilon_n$-near pentagons, turning them into genuine pentagons, while maintaining the number of sides of all other cells. Now, the resulting configurations will be highly atypical, having many high degree vertices, but the point is merely to hint at how it might be the case that a relatively small perturbation ($o(n^{-1/2})$ but not $O(n^{-3/2})$) of the optima could significantly increase the number of defects. After all, our results on defects place no restrictions on the number of near defects and Gruber's asymptotically hexagonal pattern property only bounds them by $o(n)$. The other trouble when it comes to reasoning about whether such irregular optima $Y_n$ actually obtain is that the only optima we actually have systematic access to are those for $n \in \Lo$, which are perfectly regular, and indeed should be robust to $o(n^{-1/2})$ perturbations.

\section{Adapting to the sphere}
\label{sec:sphere}

\begin{figure}[th]
    \centering
    \includegraphics[width=.5\textwidth]{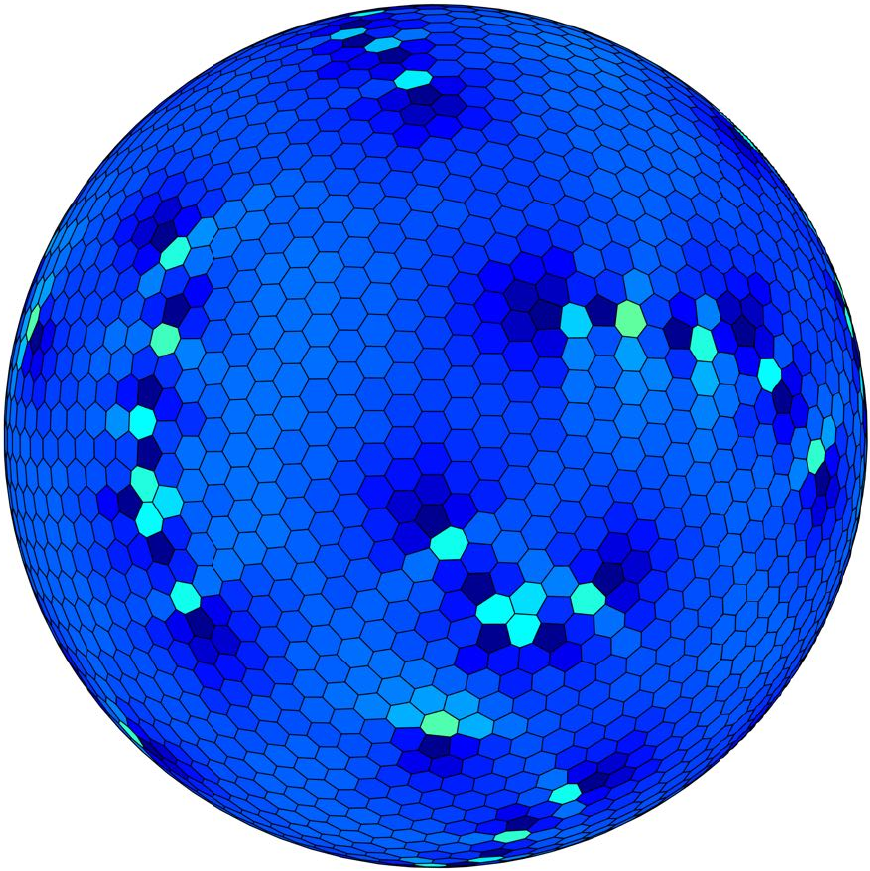}
    \caption{Example of low energy configuration on the sphere highlighting defects for $n=2500$ and $r=2$ obtained by state-of-the-art numerical methods. This figure is reprinted from \cite{Gonzalez2021a} with permission from the author.}
    \label{fig:sphereMACN}
\end{figure}
Our present work concerns the adaptation of these methods to the 2-sphere. Figure~\ref{fig:sphereMACN} shows an example of a numerically obtained low energy configuration on the sphere for $n=2500$ and $r=2$. The toolkit of Section~\ref{sec:toolkit} applies equally well to the spherical case (with suitable modifications to the proof of Lemma~\ref{lemma:hexagon_average}). A lower bound on the minimum energy can be obtain analogously to Lemma~\ref{lemma:hex_lower_bound} by passing from each Voronoi cell $D_y$ to its gnomonic projection centered at $y$. The Voronoi cells are (small) convex spherical polygons so their gnomonic projections are convex Euclidean polygons and the moment lemma applies. The projections collectively incur an error within $O(n^{-2})$, which can be tolerated. 

\begin{lemma}
    Let $Y_n \subset S^2$ be a minimizer of $\E$ with $\abs{Y_n} = n$ for each $n$. Then there is a constant $\alpha$ such that for every $\lambda < 1$,
    \[
        \E(Y_n) \ge \frac1{n^{\frac r2}}\mu(\sigma(S^2),6) + \frac1{n^{\frac r2+1}}\big( ns^2(Y_n) + \lambda\df(Y_n) \big) - O(n^{-\frac r2-1}).
    \]
    \label{lemma:sphere_lower_bound}
\end{lemma}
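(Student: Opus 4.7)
The strategy is to mirror the proof of Lemma~\ref{lemma:hex_lower_bound}, but using a gnomonic projection centered at each $y \in Y_n$ to flatten the spherical Voronoi cells $D_y \subset S^2$ and thereby bring the moment lemma to bear. The pay-off is direct: gnomonic projection sends spherical geodesics to straight lines, so each $\tilde D_y := \phi_y(D_y)$ is a bona fide convex Euclidean polygon in $T_yS^2 \cong \R^2$ with the same number of sides $k_y$ as $D_y$, and $y$ itself maps to the origin.

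The technical substance is controlling the distortion that the projection introduces. A point at geodesic distance $\theta = O(n^{-1/2})$ (by Corollary~\ref{cor:Delone}) from $y$ lands at Euclidean distance $\|w\| = \tan\theta$, and the surface measure pulls back as $d\sigma = \cos^3\theta\,dw$. A two-term Taylor expansion in $\theta$ then gives $\tilde a_y := \text{area}(\tilde D_y) = \sigma(D_y) \pm O(n^{-2})$ and
\[
    \int_{D_y}\varrho_{S^2}(x,y)^r\,d\sigma(x) = \int_{\tilde D_y}\|w\|^r\,dw \pm O(n^{-r/2-2})
\]
per cell, so that the total distortion over all $n$ cells is $\pm O(n^{-r/2-1})$---exactly the error allowance in the statement. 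The moment lemma of \S\ref{sec:polygon_moment} then yields $\int_{\tilde D_y}\|w\|^r\,dw \ge \mu(\tilde a_y, k_y)$, and hence $\E(Y_n) \ge \sum_y \mu(\tilde a_y, k_y) - O(n^{-r/2-1})$.

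From here the argument transplants from Lemma~\ref{lemma:hex_lower_bound}. A second-order Taylor expansion of $\mu$ about $(\bar{\tilde a}, \bar k)$ with Lagrange remainder, combined with (i) Lemma~\ref{lemma:hexagon_average} applied on $S^2$ (whose Euler characteristic is $\chi = 2$) giving $\bar k \le 6(1-2/n)$ and $\bar k \to 6$, (ii) monotonicity of $\mu$ in $k$, (iii) Lemma~\ref{lemma:eigenvalue_decay} giving $\inf_{R_n}\lambda_0 \ge C n^{-r/2-1}$ on the rectangle $R_n \subseteq [A_-/n, A_+/n]\times[3,K]$ supplied by Corollary~\ref{cor:Delone}, and (iv) the usual splitting of the quadratic sum into the variance piece $n s^2(Y_n)$ and the defect piece $\ge \lambda\df(Y_n)$, produces the claimed inequality up to absorbable $O(n^{-r/2-1})$ corrections coming from the shift $\bar{\tilde a} - \sigma(S^2)/n = O(n^{-2})$ (via a first-order Taylor estimate on $\mu_a = O(n^{-r/2})$) and the analogous area perturbations $\tilde a_y - \sigma(D_y)$.

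The principal technical burden is ensuring that each of these lower-order corrections---projection distortion, area shift, Taylor remainder in $\mu_a$---fits within the $O(n^{-r/2-1})$ budget \emph{uniformly} as $k_y$ ranges over $[3,K]$. This uniformity is exactly what the uniform eigenvalue estimate of Lemma~\ref{lemma:eigenvalue_decay} and the uniform area/degree bounds of Corollary~\ref{cor:Delone} were designed to supply; once those are in hand, the argument is entirely parallel to the flat case.
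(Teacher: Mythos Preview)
Your proposal is correct and follows essentially the same approach as the paper's proof sketch: gnomonic projection centered at each $y$ to obtain genuine convex Euclidean polygons, per-cell distortion of order $O(n^{-r/2-2})$ (hence $O(n^{-r/2-1})$ in aggregate), and then a verbatim rerun of the Lemma~\ref{lemma:hex_lower_bound} argument. You in fact supply more detail than the paper does---the explicit Jacobian $d\sigma = \cos^3\theta\,dw$, the area shift $\tilde a_y - \sigma(D_y) = O(n^{-2})$, and the observation that this shift in $\bar{\tilde a}$ contributes only $O(n^{-r/2-1})$ to the leading term via $\mu_a$---all of which the paper subsumes under ``by direct calculation'' and ``within tolerance''. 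The one point to flag is that Lemma~\ref{lemma:hexagon_average} is stated only for flat $\M$; the paper acknowledges (in the paragraph preceding this lemma and in a footnote) that the spherical case requires working in normal coordinates, so your invocation of it is in the same spirit as the paper's own sketch.
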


\begin{proof}[Proof sketch]
    The proof is essentially the same as that of Lemma~\ref{lemma:hex_lower_bound} but we replace each (spherical) cell $D_y$ with $\varphi_y(D_y)$ where $\varphi_y$ is the gnomonic projection centered at $y$. By direct calculation in suitable coordinates, one finds 
    \[
        \abs[\bigg]{ \int_{D_y} \varrho_{S^2}(x,y)^r\, d\sigma(x) - \int_{\varphi_y(D_y)} \norm x^r \,dx} = O(n^{-\frac r2 - 2}).
    \]
    Since gnomonic projection maps convex spherical polygons to convex Euclidean polygons (with the same number of sides) we may apply the moment lemma to each $\varphi_y(D_y)$. Moreover, the total area of the projected cells is within tolerance of $\sigma(S^2)$. 
\end{proof}
The upper bound, on the other hand, poses a much more serious difficulty. Obviously, the Goldberg-Coxeter construction does not apply directly since the sphere is not made up of Euclidean equilateral triangles. The strategy instead is to carry out the Goldberg-Coxeter construction on the regular icosahedron and transport the structure to the sphere without creating too great an error. The most naive attempt, radially projecting the point configuration from the icosahedron to the sphere fails, the distortion is not sufficiently uniform across the surface. Our goal is to find a distortion of the icosahedron itself which ``cancels out'' the distortion due to radial projection. A map which achieves this in the manner required turns out to be a rather tall order. 

It's worth noting that if this program can be carried out successfully, the upper bounds will be obtained by spherical partitions very closely related (indeed, combinatorially equivalent) to the traditional \emph{Goldberg polyhedra}. (This investigation is what originally motivated this construction before we realized the comparative simplicity of the toroidal analogue.) Thus, we hope to establish not only a L\"oschian gap bound on the spherical topological defects, but more importantly, an understanding of this phenomenon in terms of a family of extremely nice spherical partitions (essentially Goldberg polyhedra) whose density as $n \to \infty$ is related to the L\"oschian numbers.

\bibliographystyle{plain}
\bibliography{references}

\end{document}